\def\blue#1{{\bf \color{blue} #1}}
\theoremstyle{definition}
\newtheorem{Theorem}{Theorem}[section]
\newtheorem{Example}[Theorem]{Example}
\newtheorem{Remark}[Theorem]{Remark}
\newtheorem{Definition}[Theorem]{Definition}
\newtheorem{Convention}[Theorem]{Convention}
\def\blue#1{{\bf \color{blue} #1}}
\begin{document}

\title{Schubert polynomial analogues for degenerate involutions}

\author{Michael Joyce}
\address{Tulane University}
\email{mjoyce3@tulane.edu}

\keywords{Schubert polynomial, involution, degenerate involution, spherical variety, Schubert calculus}

\subjclass{05E05, 14M15, 14M27}

\date{\today}

\maketitle

\begin{abstract}
We survey the recent study of involution Schubert polynomials and a modest generalization that we call degenerate involution Schubert polynomials. We cite several conditions when (degenerate) involution Schubert polynomials have simple factorization formulae. Such polynomials can be computed by traversing through chains in certain weak order posets, and we provide explicit descriptions of such chains in weak order for involutions and degenerate involutions. As an application, we give several examples of how certain multiplicity-free sums of Schubert polynomials factor completely into very simple linear factors.
\end{abstract}

\section{Introduction}

The study of the complete flag variety $\mathcal{F}$, the algebraic variety that parameterizes the complete flags of an $n$-dimensional (complex) vector space, leads to many rich links between geometry, algebra, representation theory and combinatorics. The general linear group $G = GL_n(\mathbb{C})$ acts transitively on $\mathcal{F}$ and of particular interest are actions of subgroups $H \subset G$ that act with finitely many orbits in $\mathcal{F}$; such subgroups are called spherical.

In this survey, we focus on the case when $H = O_n(\mathbb{C})$, the orthogonal group, and on variations $H_{\mu}$ associated to compositions $\mu$ of $n$. When $\mu$ is the composition with a single part equal to $n$, $H_{\mu} = H = O_n(\mathbb{C})$, while the composition $\mu$ with $n$ parts equal to $1$ corresponds to $H_{\mu} = B$, the Borel subgroup of $G$ consisting of upper triangular matrices. In the general case when $\mu = (\mu_1, \mu_2, \dots, \mu_k)$, $H_{\mu}$ is a semidirect product of a Levi factor $L_{\mu} = O_{\mu_1} \times \dots \times O_{\mu_k}$ (embedded into $G$ diagonally) and the unipotent radical of a parabolic subgroup $P_{\mu} \subset G$. The $H$-orbits on $\mathcal{F}$ are parameterized by involutions, while the $H_{\mu}$-orbits on $\mathcal{F}$ are parameterized by combinatorial objects called $\mu$-involutions. We refer to a $\mu$-involution for some composition $\mu$ of $n$ as a degenerate involution of rank $n$.

The $B$-orbits on $\mathcal{F}$ are the classical Schubert cells of $\mathcal{F}$ and are parameterized by permutations in $S_n$, the symmetric group on $n$ letters. For $w \in S_n$, the Schubert polynomial $\mathfrak{S}_w$ is a natural representative of the class of the closure of the Schubert cell associated to $w$ in $H^*(\mathcal{F})$, the integral cohomology ring of $\mathcal{F}$. Letting $\mathcal{I}_n$ denote the set of involutions in $S_n$, for $\pi \in \mathcal{I}_n$, the involution Schubert polynomial $\hat{\mathfrak{S}}_{\pi}$ is a natural representative of the class of the closure of the $H$-orbit of $\mathcal{F}$ associated to $\pi$ in $H^*(\mathcal{F})$. Likewise, for a fixed composition $\mu$ of $n$, we let $\mathcal{I}_{\mu}$ denote the set of $\mu$-involutions and for $\pi \in \mathcal{I}_{\mu}$, the $\mu$-involution Schubert polynomial $\hat{\mathfrak{S}}^{\mu}_{\pi}$ is a natural representative of the class of the closure of the $H_{\mu}$-orbit of $\mathcal{F}$ associated to $\pi$ in $H^*(\mathcal{F})$.

Let us give a brief history of the study of involution Schubert polynomials and degenerate involution Schubert polynomials. The first detailed investigation of $H$-orbits on $\mathcal{F}$ was carried out in the more general setting of symmetric varieties by Richardson and Springer \cite{RS90, RS94}. One of their key results is that the inclusion order of $H$-orbit closures in $\mathcal{F}$ is given by the restriction of Bruhat order on $S_n$ to $\mathcal{I}_n$ \cite{RS93}. In a combinatorial framework, Can and the author \cite{CJ13} decomposed the (degenerate) involution Schubert polynomial associated to the longest permutation (which can be viewed as a $\mu$-involution for any composition $\mu$) as a sum of ordinary Schubert polynomials, by studying maximal chains in the associated weak order poset, using a result of Brion \cite{Brion98}. Can, Wyser and the author \cite{CJW16} then decomposed an arbitrary involution Schubert polynomial as a multiplicity-free sum of ordinary Schubert polynomials. Hamaker, Marberg and Pawlowski \cite{HMP15} gave the first detailed account of involution Schubert polynomials, creating a uniform combinatorial language for the study and connecting the combinatorics explicitly to the geometry of $H$-orbit closures in $\mathcal{F}$. Hamaker, Marberg and Pawlowski \cite{HMP17} then developed the theory of involution words, describing the maximal chains for any interval in the weak order poset associated to $H$-orbits on $\mathcal{F}$ and providing evidence for their explicit conjecture that such chains are closely linked to ordinary Bruhat order on $S_n$. Can, Wyser and the author \cite{CJW18} return to the study of the degenerate involution Schubert polynomial associated to the longest permutation viewed as a $\mu$-involution and use geometric considerations to show that certain multiplicity-free sums of ordinary Schubert polynomials have very simple factorizations. Hamaker, Marberg and Pawlowski \cite{HMP16} have given a transition formula for involution Schubert polynomials, generalizing the transition formula for ordinary Schubert polynomials established by Lascoux and Sch{\"u}tzenberger \cite{LS85a}. Hamaker, Marberg and Pawlowski \cite{HMP17a} have also initiated a study of involution Stanley symmetric functions, a natural limit of involution Schubert polynomials, and shown that they can be expanded positively in the Schur $P$-basis; they have also established a similar result for fixed-point-free involution Stanley symmetric functions \cite{HMP17b}.

We now describe the contents of this survey. In Section \ref{sec:notation}, we establish our notation and conventions. Then, in Section \ref{sec:K-Schubert}, we explain how to define $K$-Schubert polynomials for a spherical variety $K$ and give three important characterizations in the case $K = H_{\mu}$. We then discuss factorization results for (degenerate) involution Schubert polynomials in Section \ref{sec:factorization}. In Section \ref{sec:chains}, we describe chains in the weak order for $\mu$-involutions. Finally, in Section \ref{sec:identities}, we apply our results to give new identities expressing multiplicity-free sums of ordinary Schubert polynomials as a product of simple linear factors.

\subsection*{Acknowledgements}\label{subsec:acknowledgements}

The author is grateful to Mahir Can for many helpful discussions. The author thanks the referee for their thorough reading of the paper and their helpful suggestions for improvement.

\section{Notation and Conventions}\label{sec:notation}

In this paper, $G$ denotes a connected, reductive algebraic group over $\mathbb{C}$ (though our results apply for any ground field over an algebraically closed field of characteristic $\neq 2$). We let $B$ denote a Borel subgroup of $G$ and $K$ a \emph{spherical subgroup} of $G$, i.e. $K$ contains a dense orbit on $G / B$ under the left multiplication action.  Most of our results are specialized to the case where $G = GL_n(\mathbb{C})$; in that case, we let $B$ be the Borel subgroup of upper triangular matrices for definiteness. In this case, $G/B$ is isomorphic to the complete flag variety $\mathcal{F}$.

Our main choices of spherical subgroup $K$ will be $H = O_n(\mathbb{C})$ and a subgroup $H_{\mu}$ associated to a composition $\mu$ of $n$ that we now describe. Recall that a composition $\mu$ is a sequence of positive integers $(\mu_1, \dots, \mu_k)$ such that $\mu_1 + \cdots + \mu_k = n$. The subgroup $H_{\mu} \subseteq GL_n(\mathbb{C})$ is the semidirect product $L_{\mu} \ltimes R_{\mu}$, where $$L_{\mu} = O_{\mu_1}(\mathbb{C}) \times \cdots \times O_{\mu_k}(\mathbb{C}),$$ embedded diagonally in $GL_n(\mathbb{C})$, and $R_{\mu}$ is the unipotent radical of the parabolic subgroup $P_{\mu}$ containing $B$ whose associated Levi factor is $GL_{\mu_1}(\mathbb{C}) \times \cdots \times GL_{\mu_k}(\mathbb{C})$.

We will make use of some elementary combinatorics of the symmetric group $S_n$. The simple transpositions of $S_n$ are denoted $s_i$, $i = 1, \dots, n-1$, where $s_i$ is the permutation that interchanges $i$ and $i+1$ and fixes all other numbers. Given a permutation $w \in S_n$, a \emph{reduced decomposition} of $w$ is a sequence $(s_{i_1}, \dots, s_{i_{\ell}})$ of simple transpositions such that $w = s_{i_1} \cdots s_{i_{\ell}}$ with $\ell$ minimal. The number $\ell = \ell(w)$ is the \emph{length} of $w$ and is equal to the number of pairs $(i,j)$ with $1 \leq i < j \leq n$ such that $w(i) > w(j)$. As a slight abuse of notation, we identify a reduced decomposition with the corresponding product $s_{i_1} \cdots s_{i_{\ell}}$. The longest permutation of $S_n$ is $w_0$ defined by $w_0(i) = n + 1 - i$.

The group $S_n$ acts on $f \in \mathbb{Z}[x_1, \dots, x_n]$ as follows. For $1 \leq i \leq n - 1$, let $s_i \in S_n$ denote the simple transposition that interchanges $i$ and $i+1$ while fixing all other values. Then $s_i \cdot f$ is the polynomial obtained by interchanging the variables $x_i$ and $x_{i+1}$. Note that $f - s_i \cdot f$ is always divisible by $x_i - x_{i+1}$. Thus, we can define divided difference operators $\partial_i : \mathbb{Z}[x_1, \dots, x_n] \rightarrow \mathbb{Z}[x_1, \dots, x_n]$ by
$$\partial_i(f) = \frac{f - s_i \cdot f}{x_i - x_{i+1}}.$$
It can be easily seen that if $I$ denotes the ideal in $\mathbb{Z}[x_1,\dots,x_n]$ generated by all homogeneous symmetric polynomials in $x_1,\dots,x_n$ of positive degree, then the divided difference operator descends to an operator $\partial_i : \mathbb{Z}[x_1,\dots,x_n]/I \rightarrow \mathbb{Z}[x_1,\dots,x_n]/I$, which by abuse of notation we also refer to as $\partial_i$.

We now recall the definition of the Richardson-Springer monoid \cite{RS90} of the symmetric group $S_n$. Recall that $S_n$ is generated by the $n-1$ simple transpositions $s_i$, $1 \leq i \leq n-1$. The Richardson-Springer monoid, denoted $M(S_n)$, is generated by the elements $m(s_1), \dots, m(s_{n-1})$ subject to the relations $m(s_i)^2 = m(s_i)$, $m(s_i) m(s_j) = m(s_j) m(s_i)$ whenever $|i - j| > 1$ and $m(s_i) m(s_{i+1}) m(s_i) = m(s_{i+1}) m(s_i) m(s_{i+1})$ for $1 \leq i \leq n-2$. Then $M(S_n)$ is a finite monoid and every element of $M(S_n)$ has the form $m(w)$ for $w \in S_n$, where $m(w) := m(s_{i_1}) \cdots m(s_{i_{\ell}})$ for any reduced decomposition $s_{i_1} \cdots s_{i_{\ell}}$ of $w$.

An \emph{involution} is a permutation $w$ such that $w^2$ is the identity permutation. In particular, we consider the identity permutation itself to be an involution. We let $\mathcal{I}_n$ denote the set of all involutions in $S_n$. To define degenerate involutions, we need a convention for interpreting certain strings as permutations. Recall that $[n] = \{1, 2, \dots, n\}$.

\begin{Convention}\label{conv:string perm}
Given any string $\alpha$ containing each element of an alphabet $\mathbb{A} \subset [n]$ in exactly one position, we interpret $\alpha$ as the one-line notation of a permutation of $\mathbb{A}$, where we order $\mathbb{A}$ in increasing order. For example, the string $\alpha = [5264]$ is interpreted as the permutation of the alphabet $\mathbb{A} = \{2,4,5,6\}$ given by $2 \mapsto 5, 4 \mapsto 2, 5 \mapsto 6, 6 \mapsto 4$.
\end{Convention}

Let $\mu = (\mu_1, \dots, \mu_k)$ be a composition of $n$. Then a \emph{$\mu$-involution} is a permutation in $S_n$ such that, when the one-line notation is partitioned into strings of length $\mu_1, \dots, \mu_k$, when each string is viewed as permutation of its alphabet according to Convention \ref{conv:string perm}, the corresponding permutation is an involution. We let $\mathcal{I}_{\mu}$ denote the set of all $\mu$-involutions. A \emph{degenerate involution} (of rank $n$) is a $\mu$-involution for some composition $\mu$ (of $n$).

We adopt a notational convention that $\tau$, $\tau'$, etc. denote involutions, while $\pi$, $\pi'$, etc. denote $\mu$-involutions.

Finally, we introduce a standard notation from the theory of posets. If $X$ is any partially ordered set and $a, b \in X$, then $[a,b] := \{ x \in X : a \leq x \leq b \}$.

\section{$K$-Schubert Polynomials}\label{sec:K-Schubert}

Throughout this section, we only consider the group $G = GL_n(\mathbb{C})$.

The integral singular cohomology ring of $\mathcal{F}$ can be viewed from two complementary perspectives. First, an additive basis for $H^*(\mathcal{F})$ is given by the \emph{Schubert classes}, which are the classes of the \emph{Schubert varieties}. The Schubert varieties are the closures of the orbits of the $B$-action on $\mathcal{F}$, and are parameterized by the permutations $w \in S_n$; the corresponding Schubert variety is denoted by $X_w$ and its Schubert class by $\sigma_w \in H^*(\mathcal{F})$. Schubert varieties can also be described concretely by rank conditions (c.f. \cite{Fulton, Manivel}).

Second, there is the Borel presentation which describes the ring structure, $H^*(\mathcal{F}) \cong \mathbb{Z}[x_1, \dots, x_n] / I$, where $I$ is the ideal generated by all homogeneous symmetric polynomials in $x_1, \dots, x_n$ of positive degree, or equivalently, $I$ is generated by the elementary symmetric polynomials in $x_1, \dots, x_n$. Under this isomorphism, $x_i$ is mapped to the Chern class of the line bundle $L_i / L_{i-1}$ constructed from the tautological sequence of bundles $$ 0= L_0 \subset L_1 \subset \dots \subset L_{n-1} \subset L_n = \mathcal{F} \times \mathbb{C}^n.$$ Here the fiber of $L_i$ over the flag $F \in \mathcal{F}$ is $V_i$ if $F$ is given by $0 = V_0 \subset V_1 \subset \dots \subset V_{n-1} \subset V_n = \mathbb{C}^n$.

A natural question emerges: how are Schubert classes represented in the Borel presentation? Of course, since $H^*(\mathcal{F})$ is a quotient of $\mathbb{Z}[x_1, \dots, x_n]$, there are many different choices of representatives for $\sigma_w$. Lascoux and Sch{\"u}tzenberger found a choice of natural representatives with rich combinatorial structure \cite{LS82a}, which they called \emph{Schubert polynomials}. The Schubert polynomial $\mathfrak{S}_w$, which represents the Schubert class $\sigma_w$, has several characterizations. First, Schubert polynomials can be defined recursively. The Schubert polynomial for the longest permutation $w_0$ is given by $\mathfrak{S}_{w_0} := x_1^{n-1} x_2^{n-2} \cdots x_{n-1}$. Then, for any $w \in S_n$, $\mathfrak{S}_w := \partial_i(\mathfrak{S}_{ws_i})$ for any $i$ such that $\ell(ws_i) = \ell(w) + 1$.

Second, $\mathfrak{S}_w$ can be characterized as the unique polynomial in the $\mathbb{Z}$-span of the Artin basis, $\{ x_1^{a_1} \cdots x_n^{a_n} : 0 \leq a_i \leq n - i \}$, that represents $\sigma_w$ in $\mathbb{Z}[x_1, \dots, x_n] / I$. We adapt the latter point of view in defining the $K$-Schubert polynomial of a $K$-orbit closure $Y$ for any spherical subgroup $K$. There is one technical complication, due to the fact that the greatest common divisor of the coefficients of the polynomial representing a $K$-orbit closure may be greater than $1$. In fact, the gcd will always be of the form $2^{\kappa(Y)}$ for some non-negative integer $\kappa(Y)$ \cite{Brion01}. So we will define the $K$-Schubert polynomial to be the representative polynomial in the Artin basis divided by this common factor.

\begin{Definition}\label{def:K-schub}
Let $K$ be a spherical subgroup of $GL_n(\mathbb{C})$ and let $Y$ be a $K$-orbit closure in $\mathcal{F}$. The \emph{$K$-Schubert polynomial} of $Y$, denoted $\mathfrak{S}_Y$, is the unique polynomial in the $\mathbb{Z}$-span of $\{ x_1^{a_1} \cdots x_n^{a_n} : 0 \leq a_i \leq n - i \}$ such that $2^{\kappa(Y)} \mathfrak{S}_Y$ represents the class of $Y$ in $H^*(\mathcal{F})$. If $K = H$ is the orthogonal group and $Y$ is the closure of $H \tau B$ for some involution $\tau$, then we write $\hat{\mathfrak{S}}_{\tau}$ for $\mathfrak{S}_Y$. More generally, if $K = H_{\mu}$ and $Y$ is the $K$-orbit closure associated to the degenerate involution $\pi$, then we write $\hat{\mathfrak{S}}_{\pi}$ for $\mathfrak{S}_Y$.
\end{Definition}

While Definition \ref{def:K-schub} connects $K$-Schubert polynomials to geometry, it does not give an explicit algebraic or combinatorial description of them. In the cases of interest, we can give a recursive description using divided difference operators, analogous to the case of ordinary Schubert polynomials.

If $K = H  = O_n(\mathbb{C})$, then there is a unique closed $K$-orbit on $\mathcal{F}$, parameterized by the longest permutation $\tau_0$ viewed as an involution. Then as a corollary to Theorem \ref{thm:dominant involution factorization}, we have that $$ \hat{\mathfrak{S}}_{\tau_0} = x_1 \cdots x_{\lfloor{n/2}\rfloor} \prod_{0 < i < j \leq n - i} (x_i + x_j).$$ When $K = H_{\mu}$, there is a unique closed $K$-orbit on $\mathcal{F}$, parameterized by the longest permutation $\pi_0$ viewed as a $\mu$-involution. Then Theorem \ref{thm:mu-involution factorization} gives a similar factorization formula for $\hat{\mathfrak{S}}_{\pi_0}$, and we defer the result until Section \ref{sec:factorization} where the necessary notation is introduced.

Then we can compute $\hat{\mathfrak{S}}_{\tau}$ (resp., $\hat{\mathfrak{S}}_{\pi}$) for an involution $\tau$ (resp., $\mu$-involution $\pi$) recursively via $\hat{\mathfrak{S}}_{\tau} = \partial_i (\hat{\mathfrak{S}}_{\tau'})$ (resp., $\hat{\mathfrak{S}}^{\mu}_{\pi} = \partial_i (\hat{\mathfrak{S}}^{\mu}_{\pi'})$) when $m(s_i) \cdot \tau = \tau'$ (resp., $m(s_i) \cdot \pi = \pi'$), where the latter notation refers to the action of the Richardson-Springer monoid which is described explicitly in Section \ref{sec:chains}.

There is another approach to understanding $K$-Schubert polynomials, and that is to express them in the basis of ordinary Schubert polynomials. For a $K$-orbit closure  $Y$, we may write
$$\hat{\mathfrak{S}}_Y = \sum_{w \in S_n} c_{Y,w} \mathfrak{S}_w,$$
with $c_{Y,w} \in \mathbb{Z}$. In our cases of interest, we will see in Section \ref{sec:identities} that each $c_{Y,w}$ is a non-negative integer (this is true in general for geometric reasons \cite{Brion98}) and that in fact $c_{Y,w}$ is equal to $0$ or $1$.

\section{Factorization Results for Involution Schubert Polynomials}\label{sec:factorization}

\subsection{Permutations}

We recall the definition of the \emph{Rothe diagram} of a permutation $w \in S_n$. It is defined to be the set
$$ D(w) := \{ (i,j) \in [n] \times [n] : j < w(i) \text{ and } i < w^{-1}(j) \}. $$
More concretely, the diagram can be obtained as follows. Begin with the full set $[n] \times [n]$ and then eliminate every entry $(i, w(i))$ as well as every entry directly to the right or directly below that entry. The length of $w$ is equal to the cardinality of $D(w)$. The \emph{code} of $w$ is the sequence $c(w) = (c_1(w), \dots, c_{n}(w))$ where $c_i(w)$ is equal to the number of $j \in [n]$ such that $(i,j) \in D(w)$. \begin{comment}The \emph{shape} of $w$, denoted $\lambda(w)$, is the partition obtained by rearranging the entries of $c(w)$ in weakly decreasing order.\end{comment} We also recall that $D(w^{-1})$ is the transpose of $D(w)$.

We next recall the notion of dominant permutations. We begin by recalling a well-known theorem.

\begin{Theorem}\label{thm:domiant conds}
The following are equivalent for a permutation $w \in S_n$:
\begin{enumerate}
\item The diagram of $w$ is the diagram of a partition, i.e a left-arrayed collection of rows of weakly decreasing length.
\item The code of $w$ is a partition, i.e. $c_1(w) \geq c_2(w) \geq \cdots \geq c_n(w)$.
\item The permutation $w$ is $132$-avoiding, i.e. there is no $i < j < k$ with $w(i) < w(k) < w(j)$.
\end{enumerate}
\end{Theorem}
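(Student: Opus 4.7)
The plan is to establish the three equivalences by closing the cycle $(1) \Rightarrow (2) \Rightarrow (3) \Rightarrow (1)$, working directly from the definition $D(w) = \{(i,j) : j < w(i)\ \text{and}\ w^{-1}(j) > i\}$. The implication $(1) \Rightarrow (2)$ is immediate, since $c_i(w)$ is by definition the number of boxes in row $i$ of $D(w)$, and these row lengths are weakly decreasing precisely when $D(w)$ has partition shape.

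For $(3) \Rightarrow (1)$, I would verify both partition conditions on $D(w)$ by contraposition on 132-avoidance. For left-justification, suppose $(i,j) \in D(w)$ but $(i,j') \notin D(w)$ for some $j' < j$. Since $j' < w(i)$ and $j' \neq w(i)$, the failure of membership forces $w^{-1}(j') < i$. Setting $k = w^{-1}(j')$ and $m = w^{-1}(j) > i$, I obtain $k < i < m$ with $w(k) = j' < j = w(m) < w(i)$, a 132 pattern. For the weakly decreasing row lengths it suffices, given left-justification, to prove $(i+1,j) \in D(w) \Rightarrow (i,j) \in D(w)$; the condition $w^{-1}(j) > i$ is automatic from $w^{-1}(j) > i+1$, and $j < w(i)$ fails only if $w(i) < j < w(i+1)$, which produces a 132 pattern at $(i,\,i+1,\,w^{-1}(j))$.

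The main obstacle is $(2) \Rightarrow (3)$, which I handle by contrapositive. Given an arbitrary 132 pattern $(a,b,c)$ with $w(a) < w(c) < w(b)$, I first reduce to an \emph{adjacent} one, namely one of the form $(i, i+1, k)$. Inspecting $w(a+1)$: if $w(a+1) > w(c)$, then according to whether $w(a+1) < w(b)$ or $w(a+1) > w(b)$, either $(a, a+1, c)$ or $(a, a+1, b)$ is already an adjacent 132 pattern; if instead $w(a+1) < w(c)$, then $(a+1, b, c)$ is a 132 pattern with strictly smaller gap $b - (a+1)$, and I iterate. Once $(i, i+1, k)$ is in hand with $w(i) < w(k) < w(i+1)$, I use the alternative formula $c_\ell(w) = |\{m > \ell : w(m) < w(\ell)\}|$. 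Since $w(i+1) > w(i)$, the set counted by $c_i(w)$ equals $\{m > i+1 : w(m) < w(i)\}$, which is a proper subset of $\{m > i+1 : w(m) < w(i+1)\}$, with $k$ witnessing strict containment. Therefore $c_i(w) < c_{i+1}(w)$, contradicting (2).

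The inductive reduction of an arbitrary 132 pattern to an adjacent one is the main combinatorial content of the argument; the rest is a direct unwinding of the definitions of the Rothe diagram and the code.
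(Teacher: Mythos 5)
Your proof is correct in all three implications: the cycle $(1)\Rightarrow(2)\Rightarrow(3)\Rightarrow(1)$ is a legitimate strategy, the translation $c_i(w)=\#\{m>i: w(m)<w(i)\}$ is used correctly, the contrapositive arguments for $(3)\Rightarrow(1)$ each produce a genuine $132$ pattern, and the reduction of an arbitrary $132$ pattern to an adjacent one $(i,i+1,k)$ terminates because the gap $b-a$ strictly decreases. Note that the paper states this result as a well-known fact and supplies no proof of its own, so there is nothing to compare your argument against; your write-up fills that gap with a standard, elementary argument.
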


\begin{Definition}
A permutation satisfying any of the equivalent conditions of Theorem \ref{thm:domiant conds} is said to be \emph{dominant}.
\end{Definition}

\begin{Theorem}\label{thm:dominant factorization}
If $w \in S_n$ is a dominant permutation, then
$$ \mathfrak{S}_w = \prod_{(i,j) \in D(w)} x_i = \prod_{i = 1}^n x_i^{c_i}. $$
\end{Theorem}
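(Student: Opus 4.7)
The plan is to induct on $\binom{n}{2} - \ell(w)$, using the recursive characterization $\mathfrak{S}_w = \partial_i(\mathfrak{S}_{ws_i})$ whenever $\ell(ws_i) = \ell(w) + 1$. The two product expressions in the statement coincide because $c_i(w) = \#\{j : (i,j) \in D(w)\}$ by the definition of the code, so it suffices to prove either form. The base case $w = w_0$ is immediate: $D(w_0) = \{(i,j) : i + j \leq n\}$, so $c_i(w_0) = n - i$ and $\prod_i x_i^{n-i} = x_1^{n-1} x_2^{n-2} \cdots x_{n-1}$, matching $\mathfrak{S}_{w_0}$ by definition.

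For the inductive step, fix a dominant $w$ with $\lambda := c(w) \neq (n-1, n-2, \ldots, 0)$. The heart of the argument is a combinatorial lemma: there exists $i \in \{1, \ldots, n-1\}$ with $\lambda_i = \lambda_{i+1}$ and $\lambda_i < n - i$. I take $i$ to be the largest index with $\lambda_i < n - i$; such $i$ exists because $\lambda \neq (n-1, \ldots, 0)$, and must satisfy $i < n$ since $\lambda_n = 0 = n - n$. Maximality forces $\lambda_j = n - j$ for every $j > i$, so $\lambda_{i+1} = n - i - 1$; combined with $\lambda_i \geq \lambda_{i+1}$ and $\lambda_i \leq n - i - 1$, this pins down $\lambda_i = \lambda_{i+1} = n - i - 1$.

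Next, I verify that $w' := w s_i$ is dominant with code $\lambda + e_i$ and $\ell(w') = \ell(w) + 1$. Since $w$ is $132$-avoiding, $w$ has a descent at position $i$ precisely when $\lambda_i > \lambda_{i+1}$: a descent forces $\lambda_i \geq \lambda_{i+1} + 1$ directly, while an ascent $w(i) < w(i+1)$ rules out any intermediate value $w(i) < w(k) < w(i+1)$ at a position $k > i+1$ (else $(i, i+1, k)$ would form a $132$-pattern), giving $\lambda_i = \lambda_{i+1}$. Applied to our chosen $i$, this yields an ascent, hence $\ell(w') = \ell(w) + 1$. A direct computation with the definition of the code then shows $c_j(w') = c_j(w)$ for $j \neq i$ and $c_i(w') = \lambda_i + 1$, so $c(w') = \lambda + e_i$ is a partition satisfying $(\lambda + e_i)_j \leq n - j$, which means $w'$ is dominant by Theorem \ref{thm:domiant conds}.

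Applying the induction hypothesis to $w'$ gives $\mathfrak{S}_{w'} = \prod_j x_j^{(\lambda + e_i)_j}$. Writing $c := \lambda_i = \lambda_{i+1}$ and factoring out the monomial $\prod_{j \neq i, i+1} x_j^{\lambda_j}$, which is inert under $\partial_i$, the computation collapses to $\partial_i(x_i^{c+1} x_{i+1}^c) = x_i^c x_{i+1}^c$. Hence $\mathfrak{S}_w = \partial_i(\mathfrak{S}_{w'}) = \prod_j x_j^{\lambda_j}$, closing the induction. The main obstacle is the combinatorial lemma producing $i$ with $\lambda_i = \lambda_{i+1} < n - i$; once that is in place, the divided difference collapses to a single term precisely because the two relevant exponents differ by exactly one.
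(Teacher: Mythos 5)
The paper states Theorem \ref{thm:dominant factorization} without proof (it is a classical fact going back to Lascoux and Sch\"utzenberger), so your argument can only be judged on its own merits. Your overall strategy --- downward induction on length via $\mathfrak{S}_w = \partial_i(\mathfrak{S}_{ws_i})$, with the divided difference collapsing to a single monomial because the exponents of $x_i$ and $x_{i+1}$ differ by exactly one --- is the standard route and is sound, and your base case, your descent/ascent analysis for $132$-avoiding permutations, and the computation $c(ws_i) = c(w) + e_i$ are all correct. However, there is a genuine gap in your combinatorial lemma: the index you choose does not in general keep $ws_i$ dominant, so the induction hypothesis cannot be applied to it. Taking $i$ to be the \emph{largest} index with $\lambda_i < n-i$ does force $\lambda_i = \lambda_{i+1} = n-i-1$, but it says nothing about $\lambda_{i-1}$, which may equal $\lambda_i$; in that case $\lambda + e_i$ fails to be weakly decreasing, so $ws_i$ is not dominant and your assertion that ``$c(w') = \lambda + e_i$ is a partition'' is false. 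Concretely, take $n = 4$ and $w = [2341]$, which is dominant with code $\lambda = (1,1,1,0)$. Your recipe selects $i = 2$ (the largest index with $\lambda_i < 4 - i$), and $ws_2 = [2431]$ has code $(1,2,1,0)$, which is not a partition; indeed $[2431]$ contains the pattern $132$ in positions $1,2,3$. The induction therefore breaks at this step.

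The fix is small but necessary: choose instead the \emph{minimal} $i$ with $\lambda_i = \lambda_{i+1}$. Such an $i$ exists when $\lambda \neq (n-1, n-2, \dots, 0)$, because a strictly decreasing code bounded by $\lambda_j \leq n-j$ with $\lambda_n \geq 0$ is forced to equal the staircase. Minimality gives $\lambda_{i-1} > \lambda_i$ (or $i = 1$), and $\lambda_i = \lambda_{i+1} \leq n - i - 1 < n - i$, so $\lambda + e_i$ is again a partition with $(\lambda + e_i)_j \leq n - j$ for all $j$; by condition (2) of Theorem \ref{thm:domiant conds}, $ws_i$ is dominant and the rest of your argument goes through verbatim. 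In the example above this selects $i = 1$, and $ws_1 = [3241]$ is dominant with code $(2,1,1,0)$, as required.
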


\begin{Example}
The permutation $w = [6435721] \in S_7$ is dominant, with code $c(w) = (5,3,2,2,2,1,0)$. Thus, $\mathfrak{S}_w = x_1^5 x_2^3 x_3^2 x_4^2 x_5^2 x_6$.
\end{Example}

%\begin{Remark}
%There is a well-known generalization of Schubert polynomials to \emph{double Schubert polynomials} and double Schubert polynomials associated to dominant permutations also factor completely into linear factors.
%\end{Remark}

\subsection{Involutions}

Following \cite{HMP15}, we define the \emph{involution diagram} of $\tau \in \mathcal{I}_n$. It is important to note that the involution diagram of $\tau \in \mathcal{I}_n$ is different than its diagram when viewed as a permutation in $S_n$. We have
$$ \hat{D}(\tau) := \{ (i,j) \in [n] \times [n] : j < \tau(i) \text{ and } i < \tau(j) \text{ and } i \leq j \}. $$
Thus, $\hat{D}(\tau)$ is equal to the the lower left half of $D(\tau)$ including the diagonal. The \emph{involution length} of $\tau$, denoted $\hat{\ell}(\tau)$, is equal to the cardinality of $\hat{D}(\tau)$. The \emph{involution code} of $\tau$ is the sequence $\hat{c}(\tau) = (\hat{c}_1(\tau), \dots \hat{c}_n(\tau))$ where $\hat{c}_i(\tau)$ is equal to the number of $j \in [n]$ such that $(i,j) \in \hat{D}(\tau)$.

We introduce some definitions needed to state our next result. Let $\kappa(\tau)$ denote the number of disjoint $2$-cycles of $\tau \in \mathcal{I}_n$. Let $\hat{D}_1(\tau) = \{ (i,j) \in \hat{D}(\tau) : i = j \}$. We note that $(a,a) \in \hat{D}_1(\tau)$ if and only if $\tau$ contains a $2$-cycle $(a,b)$ with $a < b$. In particular, $\kappa(\tau) = \# \hat{D}_1(\tau)$. Let $\hat{D}_2(\tau) = \{ (i,j) \in \hat{D}(\tau) : i < j \}$. An involution $\tau \in \mathcal{I}_n$ is said to be dominant if it is dominant as a permutation.

\begin{Theorem}\cite[Theorem 3.26]{HMP15}\label{thm:dominant involution factorization}
If $\tau \in \mathcal{I}_n$ is a dominant involution, then
$$ \hat{\mathfrak{S}}_{\tau} = \prod_{(i,i) \in \hat{D}_1(\tau)} x_i \prod_{(i,j) \in \hat{D}_2(\tau)} (x_i + x_j) = \frac{1}{2^{\kappa(\tau)}} \prod_{(i,j) \in \hat{D}(\tau)} (x_i + x_j). $$
\end{Theorem}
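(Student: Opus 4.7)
The plan is to proceed by descending induction on $\hat{\ell}(\tau_0) - \hat{\ell}(\tau)$ within the class of dominant involutions, using the defining recursion $\hat{\mathfrak{S}}_\tau = \partial_i(\hat{\mathfrak{S}}_{\tau'})$ whenever $m(s_i) \cdot \tau = \tau'$ and $\hat{\ell}(\tau') = \hat{\ell}(\tau) + 1$. The base case $\tau = \tau_0 = w_0$ is immediate from the corollary recorded in Section~\ref{sec:K-Schubert}: one checks that $\hat{D}_1(\tau_0) = \{(i,i) : i \leq \lfloor n/2 \rfloor\}$ and $\hat{D}_2(\tau_0) = \{(i,j) : i < j \leq n - i\}$, so the claimed product reduces exactly to $x_1 \cdots x_{\lfloor n/2 \rfloor} \prod_{0 < i < j \leq n - i}(x_i + x_j)$. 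The equivalence of the two expressions on the right of the theorem statement is a formal manipulation: each diagonal cell in $\hat{D}_1(\tau)$ contributes a factor of $(x_i + x_i) = 2 x_i$, accounting for the $2^{\kappa(\tau)}$.

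For the inductive step we need two combinatorial ingredients. First, a dominant involution $\tau$ is encoded by a self-conjugate partition $D(\tau) \subseteq D(\tau_0)$, with $\hat{D}(\tau)$ the associated shifted shape; when $\tau \neq \tau_0$, we can enlarge $D(\tau)$ by either a single diagonal outer corner $(a,a)$ or a symmetric pair of off-diagonal outer corners $\{(a,b),(b,a)\}$ to obtain the diagram of a dominant involution $\tau'$ with $\hat{\ell}(\tau') = \hat{\ell}(\tau) + 1$. Using the explicit description of the Richardson--Springer action on involutions (to be given in Section~\ref{sec:chains}), one verifies that $m(s_i) \cdot \tau = \tau'$ for an appropriate index $i$ in each case. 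The fact that iterated outer-corner removals beginning with the staircase $D(\tau_0)$ reach every self-conjugate subpartition is a standard combinatorial fact, so the required induction chain exists.

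The second ingredient is the verification that $\partial_i$ converts the product formula for $\tau'$ into that for $\tau$. Writing $\hat{\mathfrak{S}}_{\tau'} = \hat{\mathfrak{S}}_\tau \cdot q$, where $q = x_a$ in the diagonal case and $q = x_a + x_b$ in the off-diagonal case, the twisted Leibniz rule
\[
\partial_i(f g) = (\partial_i f)\, g + (s_i \cdot f)\, \partial_i g
\]
combined with the $s_i$-symmetry of $\hat{\mathfrak{S}}_\tau$ reduces the problem to checking $\partial_i(q) = 1$. The symmetry of $\hat{\mathfrak{S}}_\tau$ under $s_i$ follows from inspecting the neighbors of $(i,i+1)$ in $\hat{D}(\tau)$: since the added corner is outermost, every remaining factor $x_c$ or $x_c + x_d$ in $\hat{\mathfrak{S}}_\tau$ either avoids the pair $\{i,i+1\}$ entirely or contains both indices of that pair.

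The main obstacle will be this $s_i$-symmetry check in the off-diagonal subcase, where the index $i$ must be chosen so that $\partial_i(x_a + x_b) = 1$. A naive choice can produce $\partial_i(x_a + x_b) = 0$ (as when $\{a,b\} = \{i,i+1\}$) or $-1$; handling the corner $(a,a+1)$ adjacent to the diagonal therefore requires taking $i = a+1$ rather than $i = a$, and a corresponding case split must be performed. The self-conjugacy of $D(\tau)$ provides exactly enough structure near the added corner to guarantee that a compatible $i$ exists and that the nearby cells of $\hat{D}(\tau)$ do not obstruct the symmetry hypothesis.
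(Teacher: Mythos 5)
The paper itself offers no proof of this statement --- it is quoted from \cite[Theorem 3.26]{HMP15} --- so your argument can only be judged on its own terms. The overall strategy (descend from $\tau_0$ by divided differences, peeling off one linear factor per outer corner of the self-conjugate diagram) is a natural one, and your base-case identification of $\hat{D}_1(\tau_0)$ and $\hat{D}_2(\tau_0)$ and your reduction via the Leibniz rule to ``$\partial_i(q)=1$ plus $s_i$-invariance of the rest'' are correct in outline. But note that, as written, the base case is circular within this paper's logic: the displayed formula for $\hat{\mathfrak{S}}_{\tau_0}$ in Section \ref{sec:K-Schubert} is stated there as a corollary \emph{of} Theorem \ref{thm:dominant involution factorization}. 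You would need an independent computation of the class of the closed $O_n(\mathbb{C})$-orbit to anchor the induction.

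The more serious gap is the combinatorial claim driving the inductive step: it is \emph{not} true that every enlargement of $D(\tau)$ by a diagonal outer corner or a symmetric pair of off-diagonal outer corners is realized by a single Richardson--Springer step $m(s_i)\cdot\tau$. Take $n=5$ and $\tau=(1,4)(2,3)=[43215]$, a dominant involution with self-conjugate diagram of shape $(3,2,1)$ and $\hat{\ell}(\tau)=4$. Adding the symmetric pair of outer corners $\{(2,3),(3,2)\}$ produces the self-conjugate shape $(3,3,2)$, whose dominant involution is $\tau''=(1,4)(2,5)=[45312]$ with $\hat{\ell}(\tau'')=5$. Yet the only index acting nontrivially on $\tau$ is $i=4$, with $m(s_4)\cdot\tau=(1,5)(2,3)$; so $\tau''$ is not of the form $m(s_i)\cdot\tau$ for any $i$, consistent with Figure \ref{fig:inv 5}, where $(1,4)(2,3)$ is covered in weak order only by $(1,5)(2,3)$. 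Hence ``one verifies that $m(s_i)\cdot\tau=\tau'$ for an appropriate index $i$ in each case'' fails, and the ``standard combinatorial fact'' that arbitrary outer-corner additions reach every self-conjugate subpartition does not lift to the weak order on $\mathcal{I}_n$. What the proof actually requires --- and what you have not supplied --- is the exhibition, for each dominant $\tau\neq\tau_0$, of one \emph{specific} addable corner and index $i$ for which simultaneously (i) the monoid step is realized, (ii) $\partial_i$ of the added factor equals $1$, and (iii) the remaining product is $s_i$-invariant. Your final paragraph correctly flags (ii) and (iii) as delicate near the diagonal, but (i) is the step that can fail outright, and identifying the correct corner is the real content of the argument.
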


\begin{Example}
The involution $\tau = (1,6)(2,5)(3,7) \in \mathcal{I}_7$ is dominant and $$\hat{\mathfrak{S}}_{\tau} = x_1 x_2 x_3 (x_1 + x_2) (x_1 + x_3) (x_1 + x_4) (x_1 + x_5) (x_2 + x_3) (x_2 + x_4)(x_3 + x_4).$$
\end{Example}

\subsection{Degenerate Involutions}

There is not yet any general theory of diagrams and dominance for degenerate involutions. However, the degenerate involution Schubert polynomial associated to the longest permutation, viewed as a $\mu$-involution for any composition $\mu$, has a very simple factorization.

Let $\mu = (\mu_1, \dots, \mu_k)$ be a composition of $n$ and let $\nu_i = \mu_1 + \cdots + \mu_i$, with the convention $\nu_0 = 0$. Let $\tau_n \in \mathcal{I}_n$ denote the longest permutation on $n$ letters, viewed as an involution. We define the degenerate involution diagram associated to the longest permutation $\pi_{\mu} \in \mathcal{I}_{\mu}$, viewed as a $\mu$-involution. Let
$$\hat{D}^{\mu}_{0} = \{ (i,j) \in [n] \times [n] : \nu_a + 1 \leq i \leq \nu_{a+1} \text{ and } \nu_b + 1 \leq j \leq \nu_{b+1} \text{ with } a < b \},$$
$$\hat{D}^{\mu}_{1} = \{ (i,j) \in [n] \times [n] : \nu_a + 1 \leq i, j \leq \nu_{a+1} \text{ and } (i - \nu_a, j - \nu_a) \in \hat{D}_1(\pi_{\mu_a}) \},$$
$$\hat{D}^{\mu}_{2} = \{ (i,j) \in [n] \times [n] : \nu_a + 1 \leq i, j \leq \nu_{a+1} \text{ and } (i - \nu_a, j - \nu_a) \in \hat{D}_2(\pi_{\mu_a}) \}.$$
Then $\displaystyle \hat{D}^{\mu}(\pi_{\mu}) := \hat{D}^{\mu}_{0} \cup \hat{D}^{\mu}_{1} \cup \hat{D}^{\mu}_{2}$, the union being disjoint.

\begin{Theorem}\cite[Corollary 3.6]{CJW18}\label{thm:mu-involution factorization}
Let $\mu$ be a composition of $n$ and let $\pi_{\mu}$ be the longest permutation, viewed as a $\mu$-involution. Then
$$ \hat{\mathfrak{S}}_{\pi_{\mu}} = \prod_{(i,j) \in \hat{D}^{\mu}_{0}} x_i \prod_{(i,i) \in \hat{D}^{\mu}_{1}} x_i \prod_{(i,j) \in \hat{D}^{\mu}_{2}} (x_i + x_j) = \frac{1}{2^{\hat{\ell}_{\mu}(\pi_{\mu})}} \prod_{(i,j) \in \hat{D}^{\mu}(\pi_{\mu})} (x_i + x_j).$$
\end{Theorem}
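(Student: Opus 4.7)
The plan is to exploit the product-like geometric structure of the closed $H_\mu$-orbit closure $Y \subset \mathcal{F}$ corresponding to $\pi_\mu$. The key observation is that $H_\mu = L_\mu \ltimes R_\mu$, where $L_\mu = O_{\mu_1} \times \cdots \times O_{\mu_k}$ is embedded block-diagonally, so the natural projection $p \colon \mathcal{F} \to G/P_\mu$ is a fiber bundle with fiber $\mathcal{F}_{\mu_1} \times \cdots \times \mathcal{F}_{\mu_k}$, and this fibration is $H_\mu$-equivariant with $R_\mu$ acting trivially on the base and $L_\mu$ acting factor-wise on the fiber.

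First, I would identify $Y$ explicitly. Since $\pi_\mu$ is the longest permutation $w_0$ partitioned according to $\mu$, on each block it restricts to the longest involution $\tau_0^{(a)}$ on the corresponding alphabet. Consequently, $Y$ consists of flags $V_\bullet$ with $V_{\nu_a} = \mathrm{span}(e_1, \dots, e_{\nu_a})$ for every $a$ and such that the induced flag on each subquotient $V_{\nu_a}/V_{\nu_{a-1}} \cong \mathbb{C}^{\mu_a}$ lies in the unique closed $O_{\mu_a}$-orbit of $\mathcal{F}_{\mu_a}$. In other words, $Y = p^{-1}(\mathrm{pt}) \cap (Y_1 \times \cdots \times Y_k)$, where $\mathrm{pt} \in G/P_\mu$ is the standard flag of block subspaces and $Y_a$ is the closed $O_{\mu_a}$-orbit closure in $\mathcal{F}_{\mu_a}$.

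Next, I would compute $[Y] \in H^*(\mathcal{F})$ using this fiber bundle structure. By the Leray--Hirsch principle, $[Y]$ factors as the product of two contributions: (a) the pullback under $p$ of the point class in $H^*(G/P_\mu)$; and (b) the relative class of $Y_1 \times \cdots \times Y_k$ inside the fiber, which under the Borel picture on each $\mathcal{F}_{\mu_a}$ pulls back to the product $\prod_a \hat{\mathfrak{S}}_{\pi_{\mu_a}}$ evaluated in the Chern roots $x_{\nu_{a-1}+1}, \dots, x_{\nu_a}$. The $R_\mu$-action being unipotent and connected contributes nothing to cohomology classes, so the normalization from Definition \ref{def:K-schub} behaves multiplicatively across the blocks.

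Then I would match the three resulting factors to the three products in the statement. The Schubert class of $\mathrm{pt} \in G/P_\mu$ is represented by $\mathfrak{S}_{w_0^\mu}$, where $w_0^\mu$ is the longest minimal coset representative of $S_\mu$ in $S_n$; this is a dominant permutation with code given by the staircase $(n-\nu_1, \dots, n-\nu_1, n-\nu_2, \dots, n-\nu_2, \dots)$, so by Theorem \ref{thm:dominant factorization} it equals $\prod_{\alpha=1}^{k-1}\prod_{\nu_{\alpha-1} < i \leq \nu_\alpha} x_i^{n-\nu_\alpha}$. A direct count shows that for each $i$ in block $\alpha$, the number of $j$'s with $(i,j) \in \hat{D}^\mu_0$ is exactly $n - \nu_\alpha$, so this product equals $\prod_{(i,j) \in \hat{D}^\mu_0} x_i$. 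The fiber contribution $\prod_a \hat{\mathfrak{S}}_{\pi_{\mu_a}}$, by Theorem \ref{thm:dominant involution factorization} applied blockwise and shifted into the variables indexed by each block, gives exactly $\prod_{(i,i) \in \hat{D}^\mu_1} x_i \cdot \prod_{(i,j) \in \hat{D}^\mu_2}(x_i + x_j)$. The final rewriting as $2^{-\hat\ell_\mu(\pi_\mu)}\prod(x_i+x_j)$ is a formal manipulation using $(x_i+x_i) = 2x_i$ on diagonal entries.

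The main obstacle is step (b): justifying that the relative class of the product $Y_1 \times \cdots \times Y_k$ on the fiber actually lifts to $\prod_a \hat{\mathfrak{S}}_{\pi_{\mu_a}}$ in the correct variables, and that the $2^{\kappa(Y)}$ normalizations combine correctly (so that $\kappa(Y) = \sum_a \kappa(Y_a)$). This requires checking Brion's divisibility constant is multiplicative across the fiber decomposition; granting this, everything else is combinatorics of Rothe diagrams and the dominant factorization theorems already in hand.
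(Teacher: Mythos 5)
Your argument is essentially correct, but it is a genuinely different route from the one the paper points to: the survey cites \cite[Corollary 3.6]{CJW18}, whose proof goes through the localization theorem in $T$-equivariant cohomology, whereas you reduce everything to the fibration $p\colon \mathcal{F}\to G/P_\mu$. Your key observation --- that $H_\mu\subseteq P_\mu$, that $R_\mu$ acts trivially on the fiber $P_\mu/B\cong \mathcal{F}_{\mu_1}\times\cdots\times\mathcal{F}_{\mu_k}$, and hence that the unique closed orbit is $Y_1\times\cdots\times Y_k$ sitting inside the fiber over $eP_\mu$ --- combined with the projection formula $i_*(i^*\tilde Z)=\tilde Z\cdot p^*[\mathrm{pt}]$ does give $[Y]=p^*[\mathrm{pt}]\cdot\prod_a [Y_a]$ (in shifted variables), and the two factors are then handled by Theorem \ref{thm:dominant factorization} (the permutation $w_0w_{0,\mu}$ is dominant with code $(n-\nu_1,\dots,n-\nu_1,n-\nu_2,\dots)$, matching $\hat{D}^\mu_0$) and Theorem \ref{thm:dominant involution factorization} applied blockwise (matching $\hat{D}^\mu_1\cup\hat{D}^\mu_2$). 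What your approach buys is a proof that needs only ordinary cohomology and the two factorization theorems already quoted in the survey; what the localization proof buys is independence from the orbit being a product in a fiber, so it adapts to settings without such a clean fibration. Two points need attention. First, the normalization: you correctly flag that you need $\kappa(Y)=\sum_a\kappa(Y_a)$, and this does not come for free from the fibration; the clean way to close it is to invoke Theorem \ref{thm:w-set}, which makes $\mathfrak{S}_Y$ a multiplicity-free (hence primitive) sum of Schubert polynomials, and to observe that your product formula is also primitive in the Schubert basis (its dominant monomial has coefficient $1$ and the monomial-to-Schubert transition is unitriangular), which forces the powers of $2$ to match. Second, a small error: the entries of $\hat{D}^\mu_0$ are off-diagonal, so the passage to $2^{-\hat\ell_\mu(\pi_\mu)}\prod(x_i+x_j)$ is \emph{not} the identity $(x_i+x_i)=2x_i$ on those factors; the first equality is the substantive one and is what your argument establishes.
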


The integer $\hat{\ell}_{\mu}(\pi_{\mu})$ is the involution length of $\pi_{\mu}$ and is defined in Section \ref{subsec:deg inv weak}. In the particular case of the longest permutation $\mu$-involution $\pi_{\mu}$, we have $\hat{\ell}^{\mu}(\pi_{\mu}) = \#(\hat{D}^{\mu}_{0} \cup \hat{D}^{\mu}_{1})$.

We remark that the proof of this result in \cite{CJW18} is geometric in nature, relying on the localization theorem in equivariant cohomology. It would be interesting to give a purely combinatorial proof of this result.

\section{Weak Order Chains}\label{sec:chains}

There are well known left and right weak orders on $S_n$ that have a geometric interpretation in terms of minimal parabolic subgroups of $GL_n(\mathbb{C})$ acting on Schubert varieties. In this section, we define the analogues of weak order for involutions and $\mu$-involutions. We will discuss the geometric meaning of these weak orders in Section \ref{sec:identities}.

\subsection{Involutions}

We recall the definition of weak order on involutions. Given $\tau \in \mathcal{I}_n$, we begin by defining an action of the generators of $M(S_n)$ via
$$
m(s_i) \cdot \tau =
\begin{cases}
\tau & \text{if}\ \tau(i+1) < \tau(i) \\
s_i\tau & \text{if}\ \tau(i)=i\ \text{and}\ \tau(i+1)=i+1 \\
s_i \tau s_i & \text{otherwise}
\end{cases}.
$$
It is a straightforward exercise to see that this defines an action of $M(S_n)$. For $\tau, \tau' \in \mathcal{I}_n$, say that $\tau \rightarrow \tau'$ if $\tau' = m(s_i) \cdot \tau$ for some $s_i$. The weak order on $\mathcal{I}_n$ is the transitive closure of the relation $\rightarrow$.

The weak order poset on $\mathcal{I}_n$ is a ranked poset, with $\text{rank}(\tau) = \hat{\ell}(\tau)$ for $\tau \in \mathcal{I}_n$. The poset has both a unique minimal element, the identity involution, and a unique maximal element, the longest permutation, which is an involution because it interchanges $1$ and $n$, $2$ and $n-1$, etc.

We now introduce a central combinatorial problem, to describe maximal chains of intervals in the weak order poset of involutions, using the language of \cite{HMP17}. Let $\tau, \tau' \in \mathcal{I}_n$ and suppose that $\tau \leq \tau'$. An \emph{involution word} from $\tau$ to $\tau'$ is a sequence $(s_{i_1}, \dots, s_{i_k})$ such that $\tau' = m(s_{i_1}) \cdots m(s_{i_k}) \cdot \tau$ and $k = \hat{\ell}(\tau') - \hat{\ell}(\tau)$. Letting $w = s_{i_1} \cdots s_{i_k}$, we may write $\tau' = m(w) \cdot \tau$. An \emph{atom} of $\tau'$ relative to $\tau$ is any $w \in S_n$ such that $\tau' = m(w) \cdot \tau$ and $\ell(w) = \hat{\ell}(\tau') - \hat{\ell}(\tau)$, and the set of all atoms of $\tau'$ relative to $\tau$ is denoted $\mathcal{A}_*(\tau, \tau')$. When $\tau \nleq \tau'$, $\mathcal{A}_*(\tau, \tau') = \emptyset$. When $\tau$ is the identity involution, we define the atoms of $\tau'$ to be $\mathcal{A}(\tau') := \mathcal{A}_*(\text{identity}, \tau')$.

\begin{Remark}\label{rem:chains}
There is a natural surjective map from reduced decompositions of elements in $\mathcal{A}_*(\tau, \tau')$ to maximal chains in the weak order poset of the interval $[\tau, \tau']$ in $\mathcal{I}_n$, in which a reduced decomposition $s_{i_1} \cdots s_{i_k}$ of some $w \in \mathcal{A}_*(\tau, \tau')$ maps to the maximal chain consisting of $\tau, m(s_{i_k}) \cdot \tau, \dots, m(s_{i_1}) \cdots m(s_{i_k}) \cdot \tau = \tau'$. The map may fail to be one-to-one because an edge in the Hasse diagram of $\mathcal{I}_n$ may have more than one $s_j$ labeling it. To get a bijection, one should instead consider the directed graph $\mathscr{G}$ with vertices $\mathcal{I}_n$ and with an edge from $\tau \in \mathcal{I}_n$ to $\tau' \in \mathcal{I}_n$ labeled $j$ if $\tau' = m(s_j) \cdot \tau$. By labeling edges, we allow for the possibility of more than edge from $\tau$ to $\tau'$. Then the reduced decompositions of elements in $\mathcal{A}_*(\tau, \tau')$ correspond bijectively to the maximal paths in the induced subgraph of $\mathscr{G}$ on the vertex set $[\tau, \tau']$.

A similar remark applies to the case of weak order for degenerate involutions considered in Section \ref{subsec:deg inv weak}.
\end{Remark}

For $\tau \in \mathcal{I}_n$, define $\text{Cyc}(\tau) := \{ (i,j) \in [n] \times [n] : j = \tau(i) \text{ and } i \leq j \}$ and $\text{Fix}(\tau) := \{ i \in [n] : \tau(i) = i \}$. The set $\text{Cyc}(\tau)$ describes the $1$- and $2$-cycles of $\tau$, while $\text{Fix}(\tau)$ consists of the fixed points of $\tau$.
%We also introduce the extended cycle set,
%$$\Gamma(\pi) := \text{Cyc}(\pi) \bigcup \{ (i,j) \in \text{Fix}(\pi) \times \text{Fix}(\pi) : i > j \}.$$

We now state a main result describing the atoms of an arbitrary involution.

\begin{Theorem}{\cite[Theorem 2.6]{CJW16}, \cite[Corollary 5.13]{HMP17}}\label{thm:inv w-set}
Let $\tau \in \mathcal{I}_n$. Then $\mathcal{A}(\tau)$ consists of all $w \in S_n$ such that:
\begin{enumerate}
\item If $(i,j) \in \text{Cyc}(\tau)$, then $w(i) \geq w(j)$.
\item If $(i,j) \in \text{Cyc}(\tau)$, then there does not exist $i < k < j$ such that $w(i) > w(k) > w(j)$.
\item If $(i,j), (k,l) \in \text{Cyc}(\tau)$ with $i < k $ and $j < l$, then $w(k) \geq w(l) > w(i) \geq w(j)$.
\end{enumerate}
\end{Theorem}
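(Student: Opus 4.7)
The plan is to let $\mathcal{A}'(\tau)$ denote the set of $w \in S_n$ satisfying conditions (1)--(3), and to prove $\mathcal{A}(\tau) = \mathcal{A}'(\tau)$ by induction on $\hat{\ell}(\tau)$. The base case $\tau = \mathrm{id}$ is immediate: we have $\mathrm{Cyc}(\tau) = \{(i,i) : i \in [n]\}$, condition (1) is automatic, condition (2) is vacuous, and condition (3) applied to distinct pairs $(i,i)$ and $(k,k)$ with $i < k$ forces $w(k) > w(i)$, so only $w = \mathrm{id}$ satisfies (1)--(3). This matches $\mathcal{A}(\mathrm{id}) = \{\mathrm{id}\}$.

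For the inductive step with $\hat{\ell}(\tau) \geq 1$, the bridge between the two sides is a \emph{first-letter decomposition}: any $w \in \mathcal{A}(\tau)$ factors as $w = s_j w'$ where $s_j$ is the initial letter of some reduced word for $w$, the element $w'$ lies in $\mathcal{A}(\tau')$ for $\tau' := m(w') \cdot \mathrm{id}$, and $\tau'$ is covered by $\tau$ in the weak order via $\tau = m(s_j) \cdot \tau'$. Conversely, for each cover $\tau' \lessdot \tau$ realized by $m(s_j)$, any $w' \in \mathcal{A}(\tau')$ with $\ell(s_j w') = \ell(w') + 1$ produces $s_j w' \in \mathcal{A}(\tau)$. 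This gives a recursive description of $\mathcal{A}(\tau)$ in terms of atoms of its weak-order predecessors.

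The core work is to check that conditions (1)--(3) are compatible with this recursion. The cover $\tau = m(s_j) \cdot \tau'$ breaks into two subcases: either (A) $\tau = s_j \tau' s_j$, where the cycle structure of $\tau$ is obtained from that of $\tau'$ by applying $s_j$ to its entries, or (B) $\tau = s_j \tau'$ with $\tau'(j) = j$ and $\tau'(j+1) = j+1$, in which case $\tau$ contains the new $2$-cycle $(j, j+1)$ not present in $\tau'$. On the permutation side, replacing $w'$ by $s_j w'$ swaps the values $j$ and $j+1$ throughout the one-line notation. In each subcase I would verify by a case analysis---tracking the positions of $j$ and $j+1$ in $w'$ relative to the cycle pairs of $\tau'$---that conditions (1)--(3) for $w'$ with respect to $\tau'$ translate exactly into conditions (1)--(3) for $w = s_j w'$ with respect to $\tau$. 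The reverse inclusion is handled symmetrically: given $w \in \mathcal{A}'(\tau)$, exhibit a left descent $s_j$ for which $s_j w$ satisfies (1)--(3) with respect to a suitable cover $\tau' \lessdot \tau$, apply the inductive hypothesis to place $s_j w \in \mathcal{A}(\tau')$, and conclude $w \in \mathcal{A}(\tau)$.

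The principal obstacle is this case analysis, since the behavior of (1)--(3) under left multiplication by $s_j$ depends sensitively on whether the values $j$ and $j+1$ sit inside a single cycle pair of $\tau'$, in two distinct cycle pairs, or at fixed points. Subcase (B) requires particular vigilance with condition (3), since the newly introduced $2$-cycle $(j,j+1)$ must be correctly ordered against all existing cycles; this is where condition (3) does substantive work rather than merely recording a consequence of (1) and (2). A potentially cleaner alternative is to prove one containment combinatorially and to match cardinalities with $|\mathcal{A}(\tau)|$ via the bijection with maximal paths in the induced subgraph of $\mathscr{G}$ on $[\mathrm{id}, \tau]$ described in Remark~\ref{rem:chains}, thereby bypassing half of the case analysis.
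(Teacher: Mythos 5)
First, a point of reference: the paper does not prove this theorem --- it is quoted from \cite{CJW16} and \cite{HMP17} without an argument --- so there is no internal proof to measure your attempt against. Your skeleton is the natural one and is in the spirit of the cited sources: the first-letter recursion is valid (if $w \in \mathcal{A}(\tau)$ and $s_j$ begins a reduced word for $w$, then $w' = s_j w$ lies in $\mathcal{A}(\tau')$ for the cover $\tau' \lessdot \tau$ with $\tau = m(s_j)\cdot \tau'$, and conversely), and your base case computation is correct. But as written the proposal is a plan rather than a proof: everything that makes the theorem true is deferred to the ``case analysis,'' and there are concrete obstructions you have not confronted.

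Two genuine gaps. First, the reverse inclusion is not ``handled symmetrically.'' Given $w \neq \mathrm{id}$ satisfying (1)--(3) for $\tau$, you must \emph{prove the existence} of a left descent $s_j$ of $w$ together with a predecessor $\tau'$ (deciding whether $\tau' = s_j\tau s_j$ or $\tau' = s_j\tau$, and why $\hat{\ell}$ drops by exactly one) such that $s_jw$ again satisfies (1)--(3); this existence step is where conditions (1)--(3) must do real work, and without it the induction never starts. Your fallback of matching cardinalities via Remark \ref{rem:chains} does not close this gap either: that bijection counts reduced words of atoms (maximal labeled paths), not atoms, and you have no independent formula for $\#\mathcal{A}(\tau)$. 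Second, if you run the case analysis on the conditions exactly as printed, it will fail, because the statement as printed is inconsistent with the paper's own definitions: for $\tau = (1,5)(2,3)$ the paper lists $[32451] \in \mathcal{A}(\tau)$, and indeed $m(s_2)m(s_1)m(s_2)m(s_3)m(s_4)\cdot\mathrm{id} = (1,5)(2,3)$ with $s_2s_1s_2s_3s_4 = [32451]$ of length $5 = \hat{\ell}(\tau)$; yet $w(2) = 2 < 4 = w(3)$ while $(2,3) \in \text{Cyc}(\tau)$, violating condition (1). The listed conditions are satisfied by $w^{-1}$ for $w \in \mathcal{A}(\tau)$, i.e.\ they describe $\mathcal{W}(Y(\tau))$ rather than $\mathcal{A}(\tau)$. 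You must resolve this before setting up the recursion: left multiplication by $s_j$ permutes \emph{values}, right multiplication permutes \emph{positions}, and conditions (1)--(3) constrain values at the positions recorded in $\text{Cyc}(\tau)$, so the two readings behave entirely differently under your subcases (A) and (B).
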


Hamaker, Marberg and Pawlowski extended this result to describe the relative atoms of an arbitrary pair of involutions.

\begin{Theorem}{\cite[Theorem 5.11]{HMP17}}
Let $\pi, \tau \in \mathcal{I}_n$. Then $\mathcal{A}_*(\tau, \tau')$ consists of all $w \in S_n$ such that for all $(i,j), (k,l) \in \text{Cyc}(\tau')$:
\begin{enumerate}
\item If $w(i) < w(j)$, then $(w(i), w(j) \in \text{Cyc}(\tau)$ and otherwise, $w(i), w(j) \in \text{Fix}(\tau)$.
\item If $i \leq j < k \leq l$, then $w(i) < w(k)$, $w(i) < w(l)$, $w(j) < w(k)$ and $w(j) < w(l)$.
\item If $i < k < j < l$, then $w(i) < w(k)$, $w(i) < w(l)$ and $w(j) < w(l)$.
\item If $i < k < l < j$, then none of the following inequalities holds:
	\begin{enumerate}
	\item $w(j) < w(k) < w(i)$
	\item $w(j) < w(l) < w(i)$
	\item $w(k) < w(i) < w(j) < w(l)$
	\item $w(k) < w(j) \leq w(i) < w(l)$
	\end{enumerate}
\item If $i < k = l < j$, then it is not the case that $w(j) < w(k) = w(l) < w(i)$.
\end{enumerate}
\end{Theorem}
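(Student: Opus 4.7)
My plan is to prove the theorem by induction on $\hat{\ell}(\tau') - \hat{\ell}(\tau)$, using Theorem \ref{thm:inv w-set} as both motivation and a sanity check for the specialization $\tau = $ identity. The base case $\hat{\ell}(\tau') = \hat{\ell}(\tau)$ is immediate: because $\tau \leq \tau'$ forces $\tau = \tau'$, we have $\mathcal{A}_*(\tau, \tau') = \{e\}$, and each of conditions (1)--(5) holds vacuously since $\mathrm{Cyc}(\tau') = \mathrm{Cyc}(\tau)$ and $w$ is the identity.

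For the necessity direction in the inductive step, I would take $w \in \mathcal{A}_*(\tau, \tau')$, pick a left descent $s_i$ of $w$, and write $w = s_i w'$ with $\ell(w') = \ell(w) - 1$. Setting $\tau'' = m(w') \cdot \tau$, the identity $m(w) \cdot \tau = \tau'$ combined with minimality of $\ell(w)$ forces $m(s_i) \cdot \tau'' = \tau'$ non-trivially, so $\hat{\ell}(\tau'') = \hat{\ell}(\tau') - 1$ and $w' \in \mathcal{A}_*(\tau, \tau'')$. The inductive hypothesis applies to $w'$. A case analysis on how $m(s_i)$ transforms the cycle structure --- either it merges two fixed points $\{i, i+1\}$ of $\tau''$ into a new $2$-cycle of $\tau'$, or it conjugates an existing cycle of $\tau''$ by $s_i$ --- then translates conditions (1)--(5) for $(w', \tau, \tau'')$ into the stated conditions for $(w, \tau, \tau')$.

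For sufficiency, I would argue the reverse direction: given a non-identity $w$ satisfying (1)--(5), construct a simple reflection $s_i$ and a covering $\tau'' \lessdot \tau'$ so that $s_i w$ still satisfies the analogous conditions for the pair $(\tau, \tau'')$ while $\ell(s_i w) = \ell(w) - 1$; induction then yields $s_i w \in \mathcal{A}_*(\tau, \tau'')$, whence $w \in \mathcal{A}_*(\tau, \tau')$. The choice of $s_i$ is guided by condition (1): one identifies a cycle of $\tau'$ whose image under $w$ can be peeled off into either a matching cycle or a pair of fixed points of $\tau$, and uses the corresponding pair of positions to define $s_i$.

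The main obstacle is condition (4), which forbids four specific patterns for nested $2$-cycles $(i,j), (k,l) \in \mathrm{Cyc}(\tau')$ with $i < k < l < j$. Each such pattern encodes a situation in which $w$ harbors a redundant inversion --- concretely, $m(w s_j) \cdot \tau = \tau'$ for some $s_j$ with $\ell(w s_j) < \ell(w)$, contradicting atomicity. The delicate converse, that the absence of patterns (4)(a)--(d) and (5) precludes every such shortening, I would handle by enumerating the possible images of two nested $2$-cycles of $\tau'$ under $w$, classified by whether their images in $\tau$ are again nested $2$-cycles, crossing $2$-cycles, a single $2$-cycle and a pair of fixed points, or two pairs of fixed points. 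For each class, I would match the numerical inequalities among $w(i), w(j), w(k), w(l)$ against the availability of a length-reducing braid or commutation move, and verify that the listed conditions are exactly those ruling out all such moves. This bookkeeping over the finite but intricate list of nesting configurations constitutes the bulk of the work.
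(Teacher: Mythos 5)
The paper does not prove this statement: it is quoted verbatim from \cite{HMP17} (their Theorem 5.11), so there is no in-paper argument to compare yours against. Your inductive frame --- peel a left descent $s_i$ off $w$, set $\tau'' = m(s_i w)\cdot\tau$, and relate the conditions for $(w,\tau,\tau')$ to those for $(s_i w,\tau,\tau'')$ --- is the natural skeleton for such a characterization, and the length bookkeeping in the necessity direction ($\hat{\ell}(\tau'')=\hat{\ell}(\tau')-1$, hence $s_i w\in\mathcal{A}_*(\tau,\tau'')$) is correct.

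As written, however, this is a plan rather than a proof, and what you defer is the entire content of the theorem. Two concrete gaps. First, in the sufficiency direction you must produce, for an arbitrary non-identity $w$ satisfying (1)--(5), a left descent $s_i$ of $w$ together with a $\tau''$ satisfying $m(s_i)\cdot\tau''=\tau'$ and $\hat{\ell}(\tau'')=\hat{\ell}(\tau')-1$ such that $s_i w$ again satisfies (1)--(5) for the pair $(\tau,\tau'')$; you give no construction of this $s_i$ and no argument that one exists, and this is precisely where the forbidden patterns (4)(a)--(d) and (5) must enter (they are what excludes the $w$ for which every candidate descent fails). Saying each pattern ``encodes a redundant inversion'' is the conclusion to be proved, not an argument. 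Second, the base case is incomplete: for the characterization to close you must show that when $\tau=\tau'$ the \emph{only} permutation satisfying (1)--(5) is the identity (equivalently, that $w=e$ satisfying (1) forces $\text{Cyc}(\tau')\subseteq\text{Cyc}(\tau)$ and hence $\tau=\tau'$); ``vacuously'' is not right, since (1)--(5) are constraints ranging over all of $S_n$, not conditions checked only at $w=e$. Until the case analysis translating (1)--(5) across a single application of $m(s_i)$ --- merging two fixed points versus conjugating a cycle, in each of the nesting configurations appearing in (2)--(5) --- is actually carried out in both directions, the theorem is not established.
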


\begin{figure}[htp]
\begin{center}

\begin{tikzpicture}[scale=.47]

\node at (0,0) (a) {$\text{id}$};

\node at (-7.5,5) (b1) {$(1,2)$};
\node at (-2.5,5) (b2) {$(2,3)$};
\node at (2.5,5) (b3) {$(3,4)$};
\node at (7.5,5) (b4) {$(4,5)$};

\node at (-12.5,10) (c1) {$(1,3)$};
\node at (-7.5,10) (c2) {$(1,2)(3,4)$};
\node at (-2.5,10) (c3) {$(1,2)(4,5)$};
\node at (2.5,10) (c4) {$(2,4)$};
\node at (7.5,10) (c5) {$(2,3)(4,5)$};
\node at (12.5,10) (c6) {$(3,5)$};

\node at (-12.5,15) (d1) {$(1,4)$};
\node at (-7.5,15) (d2) {$(1,3)(4,5)$};
\node at (-2.5,15) (d3) {$(1,3)(2,4)$};
\node at (2.5,15) (d4) {$(2,4)(3,5)$};
\node at (7.5,15) (d5) {$(1,2)(3,5)$};
\node at (12.5,15) (d6) {$(2,5)$};

\node at (-5,20) (e1) {$(1,5)$};
\node at (-10,20) (e2) {$(1,4)(2,3)$};
\node at (-0,20) (e3) {$(1,3)(2,5)$};
\node at (10,20) (e4) {$(2,5)(3,4)$};
\node at (5,20) (e5) {$(1,4)(3,5)$};

\node at (-5,25) (f1) {$(1,5)(2,3)$};
\node at (0,25) (f2) {$(1,4)(2,5)$};
\node at (5,25) (f3) {$(1,5)(3,4)$};

\node at (0,30) (g) {$(1,5)(2,4)$};

\node at (-4,2) {$\blue{s_1}$};
\node at (-1.8,2) {$\blue{s_2}$};
\node at (1.7,2) {$\blue{s_3}$};
\node at (4,2) {$\blue{s_4}$};

\node at (-9.5,6) {$\blue{s_2}$};
\node at (-8,6) {$\blue{s_3}$};
\node at (-6.2,5.6) {$\blue{s_4}$};
\node at (-3.7,6) {$\blue{s_1}$};
\node at (-2,6) {$\blue{s_3}$};
\node at (-1,5.2) {$\blue{s_4}$};
\node at (1,5.2) {$\blue{s_1}$};
\node at (2,6) {$\blue{s_2}$};
\node at (3.7,6) {$\blue{s_4}$};
\node at (6.2,5.2) {$\blue{s_1}$};
\node at (8,6) {$\blue{s_2}$};
\node at (9.5,6) {$\blue{s_3}$};

\node at (-13,11) {$\blue{s_3}$};
\node at (-10.6,11) {$\blue{s_4}$};
\node at (-7.1,11) {$\blue{s_2}$};
\node at (-5,10.4) {$\blue{s_4}$};
\node at (-2.5,10.8) {$\blue{s_2}$};
\node at (-0.5,10.4) {$\blue{s_3}$};
\node at (1.5,10.8) {$\blue{s_1}$};
\node at (3.3,10.6) {$\blue{s_4}$};
\node at (5.5,10.4) {$\blue{s_1}$};
\node at (7.2,10.8) {$\blue{s_3}$};
\node at (10.7,11) {$\blue{s_1}$};
\node at (13,11) {$\blue{s_2}$};

\node at (-12.7,16) {$\blue{s_2}$};
\node at (-10.5,15.6) {$\blue{s_4}$};
\node at (-6.2,15.6) {$\blue{s_3}$};
\node at (-3.7,15.8) {$\blue{\{s_1,s_3\}}$};
\node at (-1.5,15.8) {$\blue{s_4}$};
\node at (2.5,16) {$\blue{s_1}$};
\node at (4.2,15.9) {$\blue{\{s_2,s_4\}}$};
\node at (7,15.7) {$\blue{s_2}$};
\node at (10.8,16) {$\blue{s_1}$};
\node at (12.6,16) {$\blue{s_3}$};

\node at (-9.8,21) {$\blue{s_4}$};
\node at (-5.8,21) {$\blue{s_2}$};
\node at (-3.2,20.4) {$\blue{s_3}$};
\node at (-1.2,20.8) {$\blue{s_1}$};
\node at (0.4,21) {$\blue{s_3}$};
\node at (3.5,20.8) {$\blue{s_2}$};
\node at (5.4,21) {$\blue{s_4}$};
\node at (9.6,21) {$\blue{s_1}$};

\node at (-4.8,26) {$\blue{s_3}$};
\node at (-0.15,26) {$\blue{\{s_1,s_4\}}$};
\node at (4.8,26) {$\blue{s_2}$};

\draw[-, very thick, double]  (a) to (b1);
\draw[-, very thick, double]  (a) to (b2);
\draw[-, very thick, double] (a) to (b3);
\draw[-, very thick, double] (a) to (b4);

\draw[-, very thick] (b1) to (c1);
\draw[-, very thick, double] (b1) to (c2);
\draw[-, very thick, double] (b1) to (c3);
\draw[-, very thick] (b2) to (c1);
\draw[-, very thick] (b2) to (c4);
\draw[-, very thick, double] (b2) to (c5);
\draw[-, very thick, double] (b3) to (c2);
\draw[-, very thick] (b3) to (c4);
\draw[-, very thick] (b3) to (c6);
\draw[-, very thick, double] (b4) to (c3);
\draw[-, very thick, double] (b4) to (c5);
\draw[-, very thick] (b4) to (c6);

\draw[-, very thick] (c1) to (d1);
\draw[-, very thick, double] (c1) to (d2);
\draw[-, very thick] (c2) to (d3);
\draw[-, very thick] (c2) to (d5);
\draw[-, very thick] (c3) to (d2);
\draw[-, very thick] (c3) to (d5);
\draw[-, very thick] (c4) to (d1);
\draw[-, very thick] (c4) to (d6);
\draw[-, very thick] (c5) to (d2);
\draw[-, very thick] (c5) to (d4);
\draw[-, very thick, double] (c6) to (d5);
\draw[-, very thick] (c6) to (d6);

\draw[-, very thick] (d1) to (e1);
\draw[-, very thick, double] (d1) to (e2);
\draw[-, very thick] (d2) to (e5);
\draw[-, very thick] (d3) to (e2);
\draw[-, very thick] (d3) to (e3);
\draw[-, very thick] (d4) to (e4);
\draw[-, very thick] (d4) to (e5);
\draw[-, very thick] (d5) to (e3);
\draw[-, very thick] (d6) to (e1);
\draw[-, very thick, double] (d6) to (e4);

\draw[-, very thick, double] (e1) to (f1);
\draw[-, very thick, double] (e1) to (f3);
\draw[-, very thick] (e2) to (f1);
\draw[-, very thick] (e3) to (f1);
\draw[-, very thick] (e3) to (f2);
\draw[-, very thick] (e4) to (f3);
\draw[-, very thick] (e5) to (f2);
\draw[-, very thick] (e5) to (f3);

\draw[-, very thick] (f1) to (g);
\draw[-, very thick] (f2) to (g);
\draw[-, very thick] (f3) to (g);

\end{tikzpicture}

\caption{Weak order on $\mathcal{I}_5$.}
\label{fig:inv 5}

\end{center}
\end{figure}
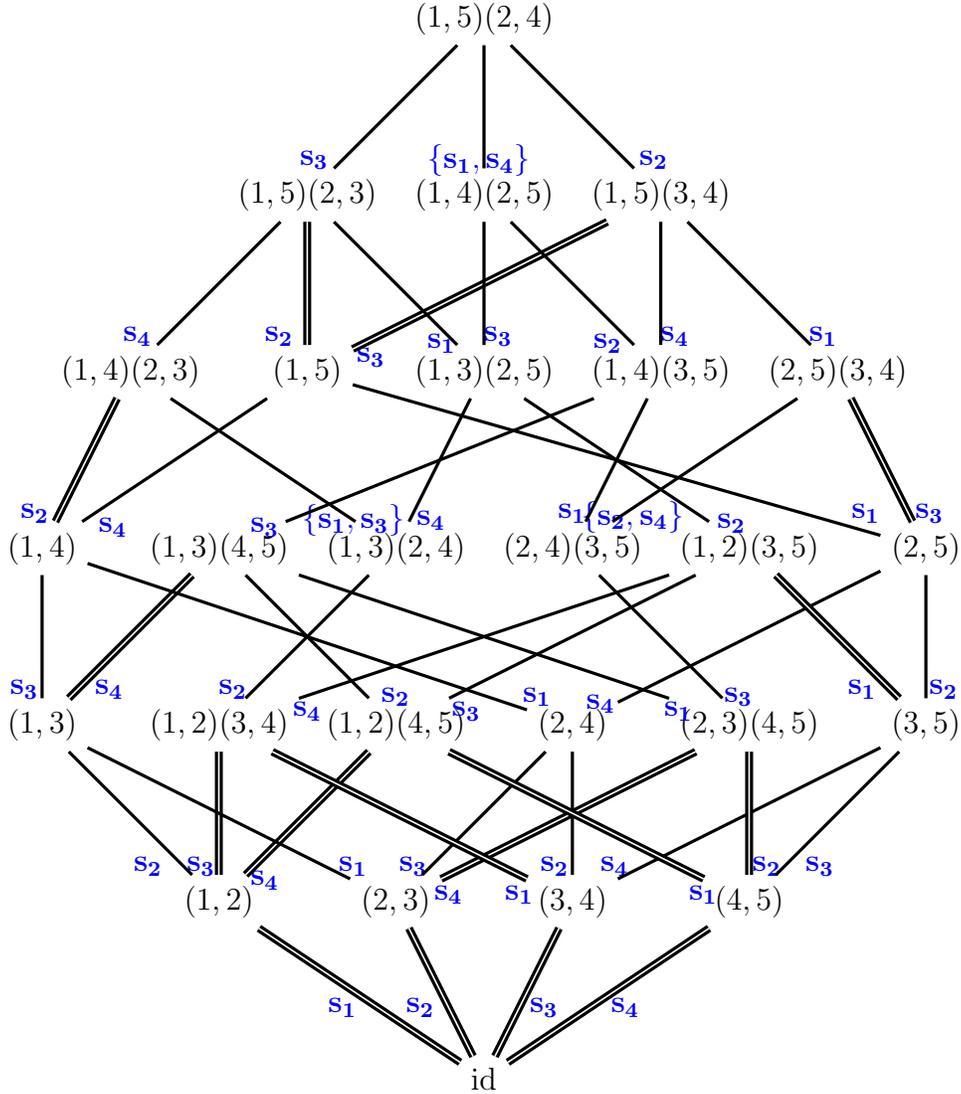

\subsection{Degenerate Involutions}\label{subsec:deg inv weak}

We now recall the definition of weak order on $\mu$-involutions. We again define an action of the generators of $M(S_n)$, but this time the definition is more involved.

\begin{Definition}
Let $\pi = [\alpha_1 | \alpha_2 | \cdots | \alpha_k] \in \mathcal{I}_{\mu}$. Then $s_i \cdot \pi$ is defined according to the following exhaustive list of mutually disjoint cases.
\begin{enumerate}
\item If $i$ occurs after $i+1$ in $\pi$, then $s_i \cdot \pi = \pi$.
\item If $i$ occurs before $i+1$ in $\pi$ and $i$ and $i+1$ occur in different $\mu$-strings of $\pi$, then $s_i \cdot \pi = s_i \pi$, where $s_i \pi$ is the $\mu$-involution obtained from $\pi$ by interchanging the values of $i$ and $i+1$.
\item If $i$ occurs before $i+1$ in $\pi$ and $i$ and $i+1$ occur in the same $\mu$-string $\alpha_j$, then there are two subcases to consider.
    \begin{enumerate}
    \item If $\alpha_j$ fixes both $i$ and $i+1$ (when the string $\alpha_j$ is viewed as a permutation as in Convention \ref{conv:string perm}), then $s_i \cdot \pi = [\alpha_1 | \cdots | s_i \alpha_j | \cdots | \alpha_k]$.
    \item Otherwise, $s_i \cdot \pi = [\alpha_1 | \cdots | s_i \alpha_j s_i | \cdots | \alpha_k]$.
    \end{enumerate}
\end{enumerate}
\end{Definition}

It is a more tedious, but still straightforward, exercise to see that this defines an action of $M(S_n)$. (It also follows from geometry, using arguments from \cite{RS90, Brion01}.) For $\pi, \pi' \in \mathcal{I}_{\mu}$, say that $\pi \rightarrow \pi'$ if $\pi' = m(s_i) \cdot \pi$ for some $s_i$. The weak order on $\mathcal{I}_{\mu}$ is the transitive closure of the relation $\rightarrow$.

The weak order poset on $\mathcal{I}_{\mu}$ is a ranked poset, with $\text{rank}(\pi) = \hat{\ell}_{\mu}(\pi)$ for $\pi \in \mathcal{I}_{\mu}$, where $\hat{\ell}_{\mu}$ is defined as follows. Let $\pi = [\alpha_1 | \cdots | \alpha_k]$ be decomposed into its $\mu$-strings. Then, using Convention \ref{conv:string perm}, each $\alpha_i$ can be viewed as an involution of its alphabet and hence has an involution length $\hat{\ell}(\alpha_i)$. Let $\text{sort}(\pi)$ be the permutation whose one-line notation is obtained by concatenating the increasing rearrangements of each $\mu$-string $\alpha_i$. Then $$\hat{\ell}_{\mu}(\pi) := \sum_{i=1}^k \hat{\ell}(\alpha_i) + \ell(\text{sort}(\pi)).$$

For example, if $\pi = [586 | 21 | 743]$, then $\hat{\ell}(\alpha_1) = 1$, $\hat{\ell}(\alpha_2) = 1$, $\hat{\ell}(\alpha_3) = 2$, while $\text{sort}(\pi) = [56812347]$ and $\ell(\text{sort}(\pi)) = 13$, so $\hat{\ell}_{\mu}(\pi) = 17$.

We now define the notion of atoms for $\mu$-involutions, which is a straightforward generalization of the notion of atoms for involutions. Let $\pi, \pi' \in \mathcal{I}_{\mu}$ and suppose that $\pi \leq \pi'$. An involution word from $\pi$ to $\pi'$ is a sequence $(s_{i_1}, \dots, s_{i_k})$ such that $\pi' = m(s_{i_1}) \cdots m(s_{i_k}) \cdot \pi$ and $k = \hat{\ell}_{\mu}(\pi') - \hat{\ell}_{\mu}(\pi)$. Letting $w = s_{i_1} \cdots s_{i_k}$, we may write $\pi' = m(w) \cdot \pi$. An atom of $\pi'$ relative to $\pi$ is any $w \in S_n$ such that $\pi' = m(w) \cdot \pi$ and $\ell(w) = \hat{\ell}_{\mu}(\pi') - \hat{\ell}_{\mu}(\pi)$, and the set of all atoms of $\pi'$ relative to $\pi$ is denoted $\mathcal{A}_*(\pi, \pi')$. When $\pi \nleq \pi'$, $\mathcal{A}_*(\pi, \pi') = \emptyset$. When $\pi$ is the identity $\mu$-involution, we define the atoms of $\pi'$ to be $\mathcal{A}(\pi') := \mathcal{A}_*(\text{identity}, \pi')$.

Let $w \in S_n$ and fix a composition $\mu = (\mu_1, \dots, \mu_k)$ of $n$. Via its one line notation, we can view $w$ as a string of length $n$. We subdivide $w$ into $k$ strings, where the $i$-th string is $\text{str}_i(w)$ consists of the string of length $\mu_i$ consisting of the elements in positions $\nu_{i} + 1$ through $\nu_{i+1}$. For example, if $w = [37184265]$ and $\mu = (4,1,3)$, then $\text{str}_1(w) = 3718$, $\text{str}_2(w) = 4$ and $\text{str}_3(w) = 265$.

For any integer $n$, let $\tau_n$ denote the longest permutation of $S_n$, viewed as an involution in $\mathcal{I}_n$ and define $\mathcal{A}_n := \mathcal{A}(\tau_n)$. For any composition $\mu$, let $\pi_{\mu}$ denote the longest permutation of $S_n$ viewed as a $\mu$-involution in $\mathcal{I}_{\mu}$ and define $\mathcal{A}_{\mu} := \mathcal{A}(\pi_{\mu})$.

\begin{Theorem}\cite[Proposition 2.5]{CJW18}\label{thm:mu-inv w-set}
The set $\mathcal{A}_{\mu}$ consists of all $w \in S_n$ such that the letters of $\text{str}_i(w)$ are $\nu_{k-i} + 1$ through $\nu_{k+1-i}$ and, viewed as a permutation of its alphabet via Convention \ref{conv:string perm}, $\text{str}_i(w)$ belongs to $\mathcal{A}_{\mu_i}$.
\end{Theorem}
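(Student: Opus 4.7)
My plan is to establish both inclusions in the characterization of $\mathcal{A}_\mu$ by decomposing the action of $M(S_n)$ on $\mathcal{I}_\mu$ into two complementary types of moves: inter-string moves (Case (2) of the $\mu$-involution action), which permute letters across distinct $\mu$-blocks, and intra-string moves (Cases (3a) and (3b)), which act only within a single block and behave as the ordinary involution action on that block's alphabet. The total length formula $\hat{\ell}_\mu(\pi_\mu) = \ell(\text{sort}(\pi_\mu)) + \sum_i \hat{\ell}(\tau_{\mu_i})$ suggests that each factor is contributed by exactly one of these two types of moves, so that an atom uses each type in an optimal way with no waste.

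For the forward direction, suppose $w \in \mathcal{A}_\mu$. I would first note that Case (2) is the only move that transfers a letter between strings, and each such move contributes precisely $+1$ to the $\ell(\text{sort}(\cdot))$ term of $\hat{\ell}_\mu$, while Cases (3a) and (3b) contribute only to the $\hat{\ell}(\alpha_i)$ terms. Since the total length of $w$ equals $\hat{\ell}_\mu(\pi_\mu)$, and $\ell(\text{sort}(\pi_\mu))$ is the minimum number of transpositions needed to carry the initial alphabets $\{1,\dots,\mu_1\}, \dots$ to the prescribed block alphabets of $\pi_\mu$, every inter-string move in a reduced word for $w$ must move a letter strictly toward its final block. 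This forces the alphabet of each $\text{str}_i(w)$ to match the claimed set of letters. The remaining intra-string moves then enact a chain from the identity involution on each block to the longest involution $\tau_{\mu_i}$ on that block, so Theorem \ref{thm:inv w-set} applied blockwise shows $\text{str}_i(w) \in \mathcal{A}_{\mu_i}$.

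For the reverse direction, given $w$ with the prescribed block structure, I would produce an explicit reduced word for $w$ by concatenating a chosen reduced word for $\text{sort}(\pi_\mu)^{-1}$ (expressed in terms of simple transpositions that exclusively act as Case (2) moves, carrying the identity $\mu$-involution to the $\mu$-involution whose $i$-th block is the increasing arrangement of the correct alphabet) with chosen reduced words for each $\text{str}_i(w) \in \mathcal{A}_{\mu_i}$ (acting as Cases (3a) or (3b) within their blocks). A length count gives $\ell(w) = \hat{\ell}_\mu(\pi_\mu)$, and tracing the chain confirms $m(w) \cdot \text{id} = \pi_\mu$, since the inter-string moves correctly assemble the alphabets while the intra-string moves construct the longest involution on each block.

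The main obstacle is justifying the clean separation of Case (2) from Case (3) contributions in an \emph{arbitrary} reduced decomposition. A subtle point is that once a letter arrives in its final block, subsequent Case (3a)/(3b) moves could, in principle, be interleaved with further cross-block activity in ways that obscure the block structure. I would address this by proving, as a preliminary lemma, that any reduced word for an atom of $\pi_\mu$ can be rearranged via the braid and commutation relations of $M(S_n)$ into a normal form in which all inter-string moves precede all intra-string moves within each affected block. This rearrangement lemma is really the technical heart of the argument; once it is in place, the blockwise analysis reduces cleanly to the known involution case via Theorem \ref{thm:inv w-set}.
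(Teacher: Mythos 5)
First, a point of reference: the survey does not prove this statement; it is quoted from \cite[Proposition 2.5]{CJW18}, where the argument goes through the geometry of the closed $H_\mu$-orbit rather than a combinatorial analysis of reduced words. So your proposal must stand on its own. Much of your bookkeeping is correct and is the natural combinatorial starting point: in a maximal chain every step is a Case (2) or Case (3) move; a Case (2) move raises $\ell(\text{sort}(\cdot))$ by exactly $1$ and leaves every normalized block involution unchanged, while a Case (3) move raises exactly one $\hat{\ell}(\alpha_j)$ by $1$ and leaves $\text{sort}$ unchanged; since $\hat{\ell}_{\mu}(\pi_{\mu})=\ell(\text{sort}(\pi_{\mu}))+\sum_j\hat{\ell}(\tau_{\mu_j})$, the number of moves of each type is forced. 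The reverse inclusion also goes through essentially as you sketch, via the length-additive factorization $w=A\cdot\text{sort}(\pi_{\mu})$ with $A$ in the Young subgroup generated by the value-blocks and $\text{sort}(\pi_{\mu})$ (applied first --- note it is $\text{sort}(\pi_{\mu})$ itself as the right factor, not its inverse) a minimal-length coset representative, so that $\text{str}_i(w)$ is read off from $A$.

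The genuine gap is in the forward direction, and it sits exactly where you flag it --- but flagging it is not the same as closing it. The theorem is a statement about the one-line notation of the group element $w$, whereas your move-type analysis only controls the sequence of intermediate $\mu$-involutions; the bridge between the two is precisely the existence of a reduced word for $w$ in which the Case (2) letters form the first-applied segment, equivalently the factorization $w=A\cdot\text{sort}(\pi_{\mu})$ with $\ell(w)=\ell(A)+\ell(\text{sort}(\pi_{\mu}))$. Your ``rearrangement lemma'' is therefore not a preliminary technicality: given standard parabolic coset theory it is equivalent to the hard half of the theorem, and no argument is offered for it. Two concrete difficulties: (i) whether a letter of a reduced word is ``inter-string'' or ``intra-string'' is not intrinsic to the letter but depends on the $\mu$-involution it acts on at that moment, so after a braid or commutation move the type labels can change, and you have not identified an invariant that the rearrangement preserves; (ii) the claim in your forward direction that the count of Case (2) moves ``forces the alphabet of each $\text{str}_i(w)$'' is asserted before the rearrangement lemma is available and does not follow from the count alone. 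To repair this you would need either a direct argument that the alphabet of each $\text{str}_i(w)$ is as prescribed (after which the factorization and the blockwise reduction to Theorem \ref{thm:inv w-set} are automatic), or an induction along the chain showing that the partial products $s_{i_t}\cdots s_{i_\ell}$ themselves factor compatibly at every stage; alternatively one can fall back on the geometric fibration argument of \cite{CJW18}.
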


For example, if $\mu = (4,1,3)$, $$\mathcal{A}_{\mu} = \{76854231, 76854312, 78564231, 78564312, 85764231, 85764312 \}.$$

\begin{figure}[htp]
\begin{center}

\begin{tikzpicture}[scale=.57]

\node at (0,0) (a) {$[432|1]$};

\node at (-5,-5) (b1) {$[324|1]$};
\node at (0,-5) (b2) {$[243|1]$};
\node at (5,-5) (b3) {$[431|2]$};

\node at (-10,-10) (c1) {$[314|2]$};
\node at (-4,-10) (c2) {$[234|1]$};
\node at (4,-10) (c3) {$[143|2]$};
\node at (10,-10) (c4) {$[421|3]$};

\node at (-10,-15) (d1) {$[214|3]$};
\node at (-4,-15) (d2) {$[134|2]$};
\node at (4,-15) (d3) {$[142|3]$};
\node at (10,-15) (d4) {$[321|4]$};

\node at (-5,-20) (e1) {$[124|3]$};
\node at (0,-20) (e2) {$[213|4]$};
\node at (5,-20) (e3) {$[132|4]$};

\node at (0,-25) (g) {$[123|4]$};

%%%%%
\node at (-3.2,-2.5) {$\blue{s_3}$};
\node at (3.5,-2.5) {$\blue{s_1}$};
\node at (.5,-2.5) {$\blue{s_2}$};

\node at (-8.6,-8) {$\blue{s_1}$};
\node at (-6.4,-9.3) {$\blue{s_3}$};

\node at (-4.6,-9) {$\blue{s_2}$};
\node at (-2.6,-9) {$\blue{s_3}$};
\node at (2.5,-9) {$\blue{s_1}$};
\node at (8,-8.8) {$\blue{s_2}$};

\node at (-9.5,-13.7) {$\blue{s_2}$};
\node at (-4.5,-13.5) {$\blue{s_1}$};
\node at (-1.5,-14) {$\blue{s_3}$};
\node at (3.5,-13.5) {$\blue{s_2}$};
\node at (6.4,-13.7) {$\blue{s_1}$};
\node at (10.5,-13.7) {$\blue{s_3}$};

\node at (-6.8,-18.8) {$\blue{s_1}$};
\node at (-5.2,-18.8) {$\blue{s_2}$};
\node at (-1.5,-18.8) {$\blue{s_3}$};
\node at (1.5,-18.8) {$\blue{s_2}$};
\node at (4.4,-18.8) {$\blue{s_3}$};
\node at (6.9,-18.8) {$\blue{s_1}$};

\node at (-2.5,-23.1) {$\blue{s_3}$};
\node at (2.5,-23.1) {$\blue{s_2}$};
\node at (.5,-23.1) {$\blue{s_1}$};
%%%%%

\draw[-, very thick]  (a) to (b1);
\draw[-, very thick]  (a) to (b2);
\draw[-, very thick] (a) to (b3);

\draw[-, very thick] (b1) to (c1);
\draw[-, very thick, double] (b1) to (c2);
\draw[-, very thick, double] (b2) to (c2);
\draw[-, very thick] (b2) to (c3);
\draw[-, very thick] (b3) to (c1);
\draw[-, very thick] (b3) to (c4);

\draw[-, very thick] (c1) to (d1);
\draw[-, very thick] (c2) to (d2);
\draw[-, very thick, double] (c3) to (d2);
\draw[-, very thick] (c3) to (d3);
\draw[-, very thick] (c4) to (d3);
\draw[-, very thick] (c4) to (d4);

\draw[-, very thick, double] (d1) to (e1);
\draw[-, very thick] (d1) to (e2);
\draw[-, very thick] (d2) to (e1);
\draw[-, very thick] (d3) to (e3);
\draw[-, very thick] (d4) to (e3);
\draw[-, very thick] (d4) to (e2);

\draw[-, very thick] (e1) to (g);
\draw[-, very thick, double] (e2) to (g);
\draw[-, very thick, double] (e3) to (g);

\end{tikzpicture}

\caption{Weak order on $\mathcal{I}_{3,1}$.}
\label{fig:mu-inv 3,1}

\end{center}
\end{figure}
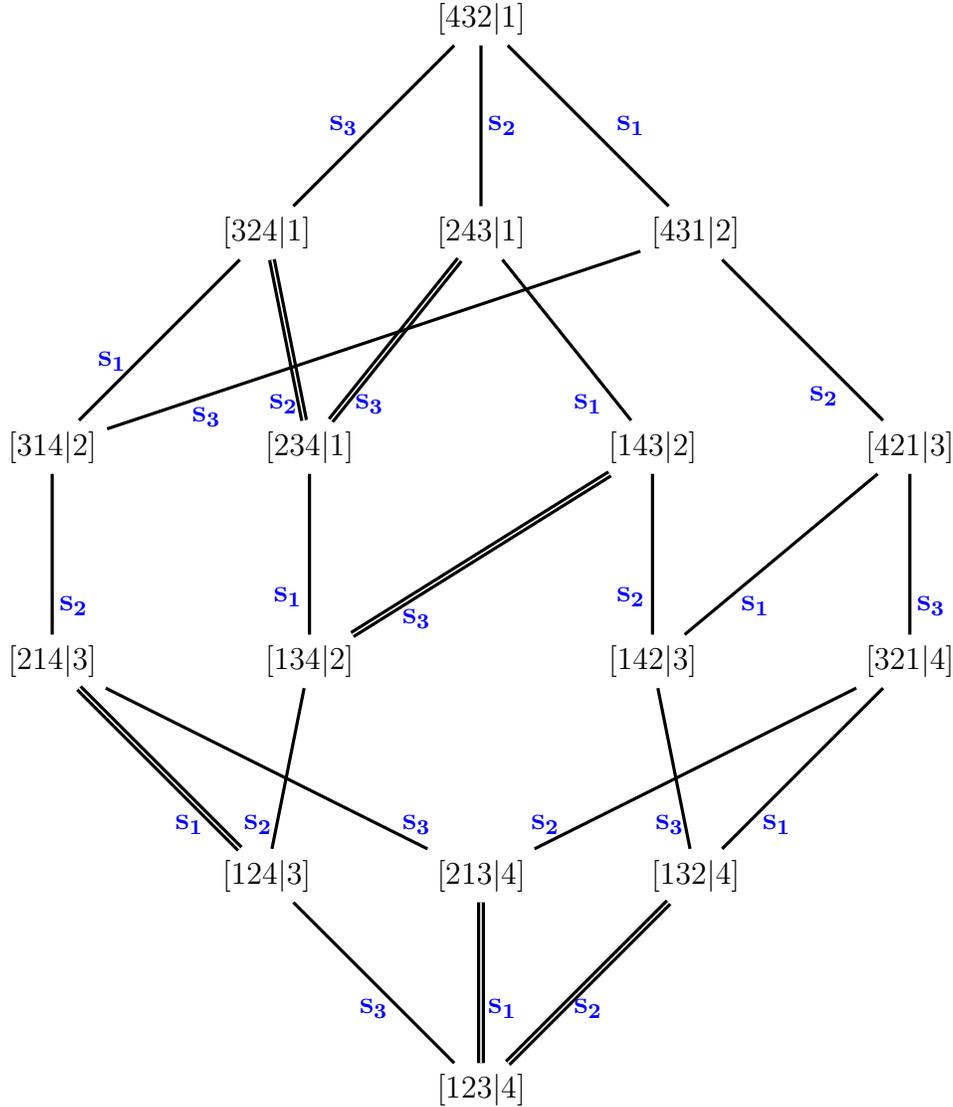

\section{Schubert Polynomial Identities}\label{sec:identities}

We now give results for how certain sums of Schubert polynomials factor completely. The primary tool is a result of Brion which relates the expansion of cohomology classes of $K$-orbit closures for spherical subgroups $K$ in the Schubert basis to chains in a weak order poset associated to $K$. We give a brief description of this result, referring to \cite{Brion98, Brion01} for more details.

To begin, let $G$ be an arbitrary connected reductive group, $B$ a Borel subgroup of $G$ and $K$ a spherical subgroup of $G$. By definition, $K$ has a dense orbit in $G/B$, which we denote $Y_0$ in the sequel. The set $\mathcal{O}_K$ of all $K$-orbits $Y$ in $G/B$ acquires a partial order, called weak order, in which $Y_0$ is the unique maximal element.

The minimal parabolic subgroups of $G$ containing $B$ are denoted $P_s$, where $s$ runs over all of the simple reflections associated to the simple roots of the root system for $(G,B)$. For any $s$, let $p_s : G/B \rightarrow G/P_s$ be the natural projection map. For two distinct $K$-orbits $Y_1$ and $Y_2$, write $Y_1 \rightarrow Y_2$ via $s$, or $Y_1 \xrightarrow{\text{$s$}} Y_2$, if the following occurs: $Y_2$ is the dense orbit of $p_s^{-1}(p_s(\overline{Y_1}))$ for some $s$. The weak order on the set of $K$-orbits in $G/B$ is the transitive closure of the relation $\rightarrow$.

For our applications, we now specialize to $G = GL_n(\mathbb{C})$. In this case, the above construction allows us to define an action of $M(S_n)$ on $\mathcal{O}_K$. In this case, the $s$ parameterizing minimal parabolic subgroups are precisely the simple transpositions $s_i$ of $S_n$ and $G/B \cong \mathcal{F}$. If $Y$ is a $K$-orbit in $\mathcal{F}$, then $m(s_i) \cdot Y$ is defined to be the dense $K$-orbit in $p_{s_i}^{-1}(p_{s_i}(\overline{Y}))$. This action on generators yields a well-defined action of $M(S_n)$ on $\mathcal{O}_K$.

The weak order is intimately related to the theory of $K$-Schubert polynomials. If $Y_1 \xrightarrow{\text{$s_i$}} Y_2$, then $\mathfrak{S}_{Y_2} = \partial_i \mathfrak{S}_{Y_1}$. (In the general setup, one can define more general divided difference operators $\partial_s$, as in \cite{BGG73, Demazure74}, but making a choice of polynomial representative for the Schubert classes is a more subtle problem.)

\begin{Definition}
Let $Y$ be a $K$-orbit on $\mathcal{F}$. Define a set $\mathcal{W}(Y) \subseteq S_n$ as follows. The set $\mathcal{W}(y)$ consists of all permutations $w \in S_n$ that admit a reduced decomposition $w = s_{i_1} \cdots s_{i_\ell}$ such that there are $K$-orbits $Y_1, \dots, Y_{\ell}$ such that
$$Y = Y_{\ell} \xrightarrow{\text{$s_{i_{\ell}}$}} Y_{\ell - 1} \xrightarrow{\text{$s_{i_{\ell-1}}$}} \cdots \xrightarrow{\text{$s_{i_2}$}} Y_1 \xrightarrow{\text{$s_{i_1}$}} Y_0$$
and $\dim Y_{i-1} = \dim Y_{i} + 1$ for $1 \leq \i \leq \ell$. (In particular, the codimension of $Y$ in $\mathcal{F}$ must be $\ell$.)
\end{Definition}

Brion \cite{Brion98} proved an important theorem expressing the class $[\overline{Y}] \in H^*(\mathcal{F})$ positively in the Schubert basis. Reinterpreted in terms of $K$-Schubert polynomials, it says

\begin{Theorem}{\cite[Theorem 1.5(ii)]{Brion98}}\label{thm:w-set}
Let $Y$ be a $K$-orbit closure in $\mathcal{F}$. Then
$$ \mathfrak{S}_Y = \sum_{w \in \mathcal{W}(Y)} \mathfrak{S}_{w},$$
where $w_0$ denotes the longest permutation in $S_n$, i.e. $w_0(i) = n + 1 - i$.
\end{Theorem}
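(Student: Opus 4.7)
The plan is to proceed by upward induction on $\ell = \operatorname{codim}_{\mathcal{F}}(Y)$. The base case is the dense orbit $Y_0$ of codimension $0$: here $\overline{Y_0} = \mathcal{F}$, so $\mathfrak{S}_{Y_0} = 1$, while only the empty reduced decomposition fits the definition, giving $\mathcal{W}(Y_0) = \{e\}$ and $\mathfrak{S}_e = 1$.

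For the inductive step, fix $Y$ of codimension $\ell > 0$. Since $Y$ is not maximal in weak order, choose a cover $Y \xrightarrow{s_i} Y'$ with $\dim Y' = \dim Y + 1$. The recursion stated earlier, $\mathfrak{S}_{Y'} = \partial_i \mathfrak{S}_Y$, combined with the inductive hypothesis for $Y'$, gives $\partial_i \mathfrak{S}_Y = \sum_{v \in \mathcal{W}(Y')} \mathfrak{S}_v$. Setting $P_Y := \sum_{w \in \mathcal{W}(Y)} \mathfrak{S}_w$ and applying the standard identity $\partial_i \mathfrak{S}_w = \mathfrak{S}_{w s_i}$ (when $w s_i < w$, else $0$), we obtain $\partial_i P_Y = \sum_{w \in \mathcal{W}(Y),\, w s_i < w} \mathfrak{S}_{w s_i}$. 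The combinatorial crux is therefore to exhibit the bijection
\[
\Phi_i : \{w \in \mathcal{W}(Y) : w s_i < w\} \xrightarrow{\;\sim\;} \mathcal{W}(Y'), \qquad w \longmapsto w s_i.
\]
Given $w$ with $s_i$ as a right descent, Matsumoto's theorem provides a reduced decomposition of $w$ ending in $s_i$; the well-definedness of the $M(S_n)$-action $w \mapsto m(w) \cdot Y$ (independent of the reduced word) implies that this decomposition realizes $w \in \mathcal{W}(Y)$ via a chain whose initial step is $Y \xrightarrow{s_i} Y'$, and truncation produces a chain for $w s_i \in \mathcal{W}(Y')$. The inverse map prepends $Y \xrightarrow{s_i} Y'$ to any chain realizing $v \in \mathcal{W}(Y')$.

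Granting $\Phi_i$, we conclude $\partial_i(\mathfrak{S}_Y - P_Y) = 0$ for every $s_i$ corresponding to a cover. For the remaining $s_j$ satisfying $m(s_j) \cdot Y = Y$, a parallel analysis shows that no $w \in \mathcal{W}(Y)$ has $s_j$ as a right descent (else the $M(S_n)$-action would force a chain through a cover, contradicting the fixing), so $\partial_j P_Y = 0$; correspondingly, $\overline{Y}$ is $P_{s_j}$-invariant and hence $[\overline{Y}]$ pulls back from $H^*(G/P_{s_j})$, forcing $\mathfrak{S}_Y$ to be $s_j$-symmetric and $\partial_j \mathfrak{S}_Y = 0$ as well. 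Thus $\mathfrak{S}_Y - P_Y$ is invariant under every simple transposition, hence $S_n$-symmetric. Since $\mathfrak{S}_Y$ and $P_Y$ both lie in the $\mathbb{Z}$-span of the Artin basis $\{x_1^{a_1} \cdots x_n^{a_n} : 0 \leq a_i \leq n - i\}$, which descends to a basis of $\mathbb{Z}[x_1, \dots, x_n]/I$, and since every positive-degree $S_n$-symmetric polynomial lies in $I$, the difference $\mathfrak{S}_Y - P_Y$, homogeneous of degree $\ell > 0$, must vanish. The main obstacle is the bijection $\Phi_i$: specifically, justifying that any reduced decomposition of $w \in \mathcal{W}(Y)$ can be transformed via braid and commutation moves into one ending in $s_i$ while continuing to witness a valid codimension-one chain. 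This rests on the Richardson--Springer principle that $M(S_n)$ acts on orbits through $w \mapsto m(w) \cdot Y$, so braid moves among reduced words translate to rearrangements of valid chains.
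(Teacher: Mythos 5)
The paper offers no proof of this statement---it is imported verbatim from \cite[Theorem 1.5(ii)]{Brion98}---so there is no internal argument to compare against. What you have written is, in outline, a faithful reconstruction of how the result is actually established: induct on codimension, use the recursion $\mathfrak{S}_{Y'} = \partial_i \mathfrak{S}_{Y}$ along weak-order covers, match right descents of elements of $\mathcal{W}(Y)$ with chains passing through a fixed cover, and kill the symmetric remainder. The closing step is fine provided you observe that $\partial_j$ preserves the $\mathbb{Z}$-span of the Artin basis (it sends each $\mathfrak{S}_w$, $w \in S_n$, to another such polynomial or to $0$), so that a positive-degree symmetric element of that span, lying in $I$, is forced to be $0$. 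You also implicitly use that every non-dense orbit admits a cover; this is the paper's assertion that $Y_0$ is the unique maximal element of weak order, and should be cited.

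The one place that genuinely needs tightening is the bijection $\Phi_i$, and a single observation repairs both your flagged ``main obstacle'' and an unflagged gap in surjectivity. Since each $m(s_j)$ raises orbit dimension by at most $1$, a dimension count gives the reduced-word-free description $\mathcal{W}(Y) = \{ w \in S_n : \ell(w) = \mathrm{codim}\, Y \text{ and } m(w)\cdot Y = Y_0 \}$: if the endpoint is $Y_0$ and the word has exactly $\mathrm{codim}\, Y$ letters, then \emph{every} reduced word of $w$ yields a strictly increasing chain. This makes the chain condition independent of the chosen reduced word, so any reduced word of $w$ ending in $s_i$ (which exists when $w s_i < w$) witnesses a chain through $Y' = m(s_i)\cdot Y$, and truncation gives $w s_i \in \mathcal{W}(Y')$. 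For the inverse map you must additionally verify that $v \in \mathcal{W}(Y')$ forces $\ell(v s_i) = \ell(v) + 1$; your proposal simply prepends the edge without checking that the concatenated word is reduced. This too follows from the dimension count: if $\ell(v s_i) = \ell(v) - 1$, then writing $v = u s_i$ with $\ell(u) = \ell(v)-1$ and using $m(s_i)\cdot Y' = Y'$ (idempotence) gives $m(v)\cdot Y' = m(u)\cdot Y'$, which cannot reach $Y_0$ in only $\mathrm{codim}(Y') - 1$ steps. With these points filled in, your argument is correct and self-contained modulo the geometric inputs the paper already states (well-definedness of the $M(S_n)$-action and the divided-difference recursion for the normalized polynomials $\mathfrak{S}_Y$); note that the latter recursion is where the $2^{\kappa(Y)}$ normalization is silently doing work, since at the level of cohomology classes the pushforward--pullback along $p_{s_i}$ can introduce a factor of $2$.
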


In the case where $K = H = O_n(\mathbb{C})$ (resp., $K = H_{\mu}$), the weak order on involutions (resp. $\mu$-involutions) is the \emph{opposite} of the weak order for the $K$-orbits on $\mathcal{F}$. In particular, if $Y(\tau)$ is the $K$-orbit on $\mathcal{F}$ corresponding to $\tau \in \mathcal{I}_n$, then $w \in \mathcal{W}(Y(\tau))$ if and only if $w^{-1} \in \mathcal{A}(\tau)$. Similarly, if $Y(\pi)$ is the $K$-orbit on $\mathcal{F}$ corresponding to $\pi \in \mathcal{I}_{\mu}$, then $w \in \mathcal{W}(Y(\pi))$ if and only if $w^{-1} \in \mathcal{A}(\pi)$.

\begin{Theorem}\label{thm:schub poly ident inv}
Let $\pi \in \mathcal{I}_n$ be a dominant involution in $S_n$, and let $Y(\tau)$ denote the $K$-orbit of $\tau$ in $\mathcal{F} \cong GL_n(\mathbb{C}) / B$. Then
$$ \sum_{w \in \mathcal{A}(\tau)} \mathfrak{S}_{w^{-1}} = \prod_{(i,i) \in \hat{D}_1(\tau)} x_i \prod_{(i,j) \in \hat{D}_2(\tau)} (x_i + x_j). $$
\end{Theorem}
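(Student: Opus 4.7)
The plan is to recognize this as an essentially immediate corollary of three ingredients already assembled in the excerpt: the dominant involution factorization (Theorem \ref{thm:dominant involution factorization}), Brion's positivity/expansion theorem (Theorem \ref{thm:w-set}), and the inversion identification $\mathcal{W}(Y(\tau)) = \{w^{-1} : w \in \mathcal{A}(\tau)\}$ noted in the paragraph preceding the statement. The theorem's statement is really just the assertion that two different combinatorial formulas for the same cohomological quantity agree.

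First, I would apply Brion's formula (Theorem \ref{thm:w-set}) to the $H$-orbit closure $Y(\tau) \subset \mathcal{F}$, obtaining
$$ \hat{\mathfrak{S}}_\tau \;=\; \mathfrak{S}_{Y(\tau)} \;=\; \sum_{w \in \mathcal{W}(Y(\tau))} \mathfrak{S}_w. $$
Here one must be careful only about the normalization in Definition \ref{def:K-schub}: the $2^{\kappa(Y(\tau))}$ factor has been absorbed into the identification $\mathfrak{S}_{Y(\tau)} = \hat{\mathfrak{S}}_\tau$, so that Brion's expansion is multiplicity-free in this setting. Next, I would invoke the bijection $w \mapsto w^{-1}$ between $\mathcal{W}(Y(\tau))$ and $\mathcal{A}(\tau)$ (stated explicitly just before Theorem \ref{thm:schub poly ident inv}) to rewrite the sum as
$$ \hat{\mathfrak{S}}_\tau \;=\; \sum_{w \in \mathcal{A}(\tau)} \mathfrak{S}_{w^{-1}}. $$

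Finally, since $\tau$ is a dominant involution by hypothesis, Theorem \ref{thm:dominant involution factorization} evaluates the left-hand side explicitly as
$$ \hat{\mathfrak{S}}_\tau \;=\; \prod_{(i,i) \in \hat{D}_1(\tau)} x_i \prod_{(i,j) \in \hat{D}_2(\tau)} (x_i + x_j). $$
Chaining these three equalities yields the claimed identity. No further combinatorial manipulation is required: the theorem is a direct consequence of combining the geometric expansion with the closed factorization.

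The only real point requiring care — and the ``main obstacle'' in this otherwise very short proof — is the interplay between the $2^{\kappa(\tau)}$ normalization built into the definition of $\hat{\mathfrak{S}}_\tau$, the corresponding normalization in Brion's expansion, and the statement of Theorem \ref{thm:dominant involution factorization} (which displays both a product of mixed factors $x_i$ and $(x_i+x_j)$ and the equivalent form $\tfrac{1}{2^{\kappa(\tau)}} \prod (x_i+x_j)$). One must verify that the same polynomial is being equated on both sides — i.e., that the dominant factorization formula is exactly the Artin-basis representative asserted by Brion's theorem, not its doubled version. Once this bookkeeping is checked, the result follows by transitivity of equality.
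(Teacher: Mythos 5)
Your proposal is correct and is essentially identical to the paper's own proof, which likewise equates both sides to $\mathfrak{S}_{Y(\tau)}$ via Theorem \ref{thm:w-set} (with the $w \mapsto w^{-1}$ identification) and the dominant involution factorization. In fact your citation of Theorem \ref{thm:dominant involution factorization} for the right-hand side is the correct one; the paper's printed proof mistakenly cites Theorem \ref{thm:mu-involution factorization} there, and your explicit attention to the $2^{\kappa(\tau)}$ normalization is a point the paper leaves implicit.
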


\begin{proof}
The left hand side is equal to $\mathfrak{S}_{Y(\tau)}$ by Theorem \ref{thm:w-set}, while the right hand side is equal to $\mathfrak{S}_{Y(\tau)}$ by Theorem \ref{thm:mu-involution factorization}.
\end{proof}

\begin{Example}
Let $\tau = (1,5)(2,3) \in \mathcal{I}_5$. Then $\mathcal{A}(\tau) = \{32451, 32514, 35124, 51324\}$ and
$$\mathfrak{S}_{52134} + \mathfrak{S}_{42153} + \mathfrak{S}_{34152} + \mathfrak{S}_{24351} = x_1 x_2 (x_1 + x_2) (x_1 + x_3)(x_1 + x_4).$$
\end{Example}

\begin{Theorem}\label{thm:schub poly ident mu-inv}
Let $\pi \in \mathcal{I}_n$ be the longest permutation in $S_n$ viewed as a $\mu$-involution, and let $Y(\pi)$ denote the corresponding (closed) $K$-orbit of $\pi$ in $\mathcal{F} \cong GL_n(\mathbb{C}) / B$. Then
$$ \sum_{w \in \mathcal{A}_{\mu}} \mathfrak{S}_{w^{-1}} = \prod_{(i,j) \in \hat{D}_0^{\mu}} x_i \prod_{(i,i) \in \hat{D}_1^{\mu}} x_i \prod_{(i,j) \in \hat{D}_2^{\mu}} (x_i + x_j). $$
\end{Theorem}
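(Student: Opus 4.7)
The plan is to imitate the proof of Theorem \ref{thm:schub poly ident inv}, identifying both sides of the claimed identity with the $K$-Schubert polynomial $\hat{\mathfrak{S}}_{\pi_\mu}$ of the closed $H_\mu$-orbit $Y(\pi_\mu)$. All the ingredients have been assembled earlier in the paper, so the proof reduces to stitching them together correctly.

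First, I would rewrite the left-hand side using Brion's formula, Theorem \ref{thm:w-set}, which gives
$$\mathfrak{S}_{Y(\pi_\mu)} = \sum_{w \in \mathcal{W}(Y(\pi_\mu))} \mathfrak{S}_w.$$
The discussion immediately preceding Theorem \ref{thm:schub poly ident inv} notes that for $K = H_\mu$ the weak order on $\mu$-involutions is the opposite of the weak order on $K$-orbits in $\mathcal{F}$, and consequently $w \in \mathcal{W}(Y(\pi))$ if and only if $w^{-1} \in \mathcal{A}(\pi)$. Specializing to $\pi = \pi_\mu$ and using the shorthand $\mathcal{A}_\mu = \mathcal{A}(\pi_\mu)$, the substitution $w \mapsto w^{-1}$ turns the sum above into $\sum_{w \in \mathcal{A}_\mu} \mathfrak{S}_{w^{-1}}$, which is the left-hand side of the theorem.

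Next, I would invoke Theorem \ref{thm:mu-involution factorization}, which computes $\hat{\mathfrak{S}}_{\pi_\mu}$ as exactly the product
$$\prod_{(i,j) \in \hat{D}^{\mu}_{0}} x_i \prod_{(i,i) \in \hat{D}^{\mu}_{1}} x_i \prod_{(i,j) \in \hat{D}^{\mu}_{2}} (x_i + x_j),$$
which is the right-hand side of the theorem. Since by Definition \ref{def:K-schub} we have $\hat{\mathfrak{S}}_{\pi_\mu} = \mathfrak{S}_{Y(\pi_\mu)}$, both sides of the desired identity equal this common polynomial, and the equality follows.

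There is no substantive obstacle at this point: the combinatorial heavy lifting (the atom description in Theorem \ref{thm:mu-inv w-set}, the factorization in Theorem \ref{thm:mu-involution factorization}) and the geometric heavy lifting (Brion's positivity Theorem \ref{thm:w-set}, together with the inversion relating $\mathcal{W}$ and $\mathcal{A}$) have all been carried out in preceding sections. The only real care needed is checking the order-reversal and inversion correspondence between $\mathcal{W}(Y(\pi_\mu))$ and $\mathcal{A}_\mu$; once this is in hand the proof is a one-line concatenation of Theorems \ref{thm:w-set} and \ref{thm:mu-involution factorization}, just as in Theorem \ref{thm:schub poly ident inv}.
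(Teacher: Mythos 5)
Your proof is correct and takes essentially the same route as the paper: both sides are identified with $\mathfrak{S}_{Y(\pi_\mu)}$, the left-hand side via Theorem \ref{thm:w-set} together with the correspondence $w \in \mathcal{W}(Y(\pi))$ iff $w^{-1} \in \mathcal{A}(\pi)$, and the right-hand side via the factorization formula. (You cite Theorem \ref{thm:mu-involution factorization} for the right-hand side, which is in fact the correct reference; the paper's printed proof cites Theorem \ref{thm:dominant involution factorization} here, an apparent slip since the two references seem to have been swapped with those in the proof of Theorem \ref{thm:schub poly ident inv}.)
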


\begin{proof}
The left hand side is equal to $\mathfrak{S}_{Y(\pi)}$ by Theorem \ref{thm:w-set}, while the right hand side is equal to $\mathfrak{S}_{Y(\pi)}$ by Theorem \ref{thm:dominant involution factorization}.
\end{proof}

\begin{Example}
Let $\mu = (3,1)$ and let $\pi_{\mu} = [432 | 1]$, the longest permutation considered as $\mu$-involution. Then $\mathcal{A}_{\mu} = \{ 4231, 4312 \}$ and
$$\mathfrak{S}_{4231} + \mathfrak{S}_{3421} = x_1^2 x_2 x_3 (x_1 + x_2).$$
\end{Example}

\section{Other Directions}\label{sec:directions}

We close with a list of several open problems in the field.

\begin{enumerate}[label = (\arabic*)]
\item It is possible to define a natural notion of diagrams for $\mu$-involutions. However, there does not appear to be an obvious notion of dominance for $\mu$-involutions. In particular, a $\mu$-involution whose underlying permutation is dominant may not factor into linear factors. Can one define such a notion? In particular, can one find a large class of $\mu$-involutions whose degenerate involution Schubert polynomials factor completely into simple linear factors described in terms of the combinatorics of the associated diagrams?

\item There is an obvious bijection between $K$-orbits on $G/B$ and $B$-orbits on $G/K$. One can transport the weak order structure for $B$-orbits on $G/K$. In general, $G/K$ will not be a complete variety, so one is naturally led to study completions of $G/K$. In many cases, including when $G = GL_n(\mathbb{C})$ and $K = H = O_n(\mathbb{C})$, there is a natural completion to use, called the wonderful compactification of $G/K$ \cite{DP83}. The wonderful compactification of $G/H$ is the classical variety $X$ of complete quadrics, and the $G$-orbits are in bijection with compositions $\mu$ of $n$. Moreover, the stabilizer subgroup of a point in the $G$-orbit associated to $\mu$ is conjugate to the subgroup $H_{\mu}$. Thus, a geometric study of $X$ can unite the various cases considered here. In particular, the $B$-orbits on $X$ are parameterized by the degenerate involutions of rank $n$, and the weak order on $X$ is the disjoint union of the weak orders on $\mu$-involutions for all compositions $\mu$ of $n$.

    The geometry of $X$ is intricate. The cohomology ring of $X$ has been studied directly \cite{DGMP88} and as an example of a complete symmetric variety \cite{BDP90}. Recently, several combinatorial models related to the geometry of $X$ have been introduced \cite{BCJ16}. Still, there is much to be learned about $X$. Are there are any hidden symmetries in $H^*(X)$? Can one give a combinatorial description of the analogue of the Bruhat order, the inclusion order of $B$-orbit closures on $X$?

\item The combinatorics studied here relates to the symmetric subgroup $O_n(\mathbb{C})$ of $GL_n(\mathbb{C})$. There are similar results for the symmetric subgroups $Sp_n(\mathbb{C})$ \cite{CJW16, HMP15, HMP17, WY17} and $GL_p(\mathbb{C}) \times GL_q(\mathbb{C})$, $p + q = n$, \cite{WY14, CJW16} of $GL_n(\mathbb{C})$ . But there are symmetric subgroups associated to other reductive algebraic groups as well. There are four more classical families of symmetric subgroups and 12 exceptional symmetric subgroups \cite[Table 26.3]{Timashev11}, and it would be interesting to extend the combinatorial and geometric results from the `Type A' cases to the other Lie types.
\end{enumerate}

%\bibliography{References}
%\bibliographystyle{plain}

\end{document}